\renewcommand{\mod}[1]{\allowbreak \if@display \mkern 8mu \else
\mkern 5mu\fi {\operator@font mod}\,\,#1}
\newcommand{\bn}{\mathbb N}
 \newcommand{\bq}{\mathbb Q}
\newcommand{\br}{\mathbb R}
\newcommand{\bz}{\mathbb Z}
\newcommand{\bff}{\mathbb F}
\newcommand{\bk}{\mathbb K}
\newtheorem{theorem}{Theorem}[section]
\newtheorem{definition}[theorem]{Definition}
\newtheorem{lemma}[theorem]{Lemma}
\newcommand\F{\mathcal F}
\newcommand\Hh{\mathcal H}
\newcommand\M{\mathcal M}
\begin{document}
\title{On ground fields of arithmetic hyperbolic reflection groups. II}
\date{}
\author{Viacheslav V. Nikulin\footnote{This paper was written with the
financial support of EPSRC, United Kingdom (grant no. EP/D061997/1)}}
\maketitle

\begin{abstract}
This paper continues \cite{Nik5} (arXiv.org:math.AG/0609256) and
\cite{Nik6} \linebreak
(arXiv:0708.3991).

Using authors's methods of 1980, 1981,
some explicit finite sets of number fields containing all ground fields of
arithmetic hyperbolic reflection groups in dimensions at least 4
are defined, and good explicit bounds of
their degrees (over $\bq$) are obtained. This could be important for
further classification.

Thus, now, an explicit bound of degree of ground fields of arithmetic
hyperbolic reflection groups is
unknown in dimension 3 only.
\end{abstract}

\vskip1cm \centerline{\it To 70th Birthday of \'Ernest Borisovich Vinberg}
\vskip1cm

\section{Introduction} \label{introduction} This paper continues \cite{Nik6}.
See the introduction of this paper about
history, definitions and results concerning the subject.

In \cite{Nik6} some explicit finite sets of totally real algebraic number
fields containing all ground fields of arithmetic hyperbolic reflection
groups in dimensions $n\ge 6$ were defined, and good explicit bounds of
degrees (over $\bq$) of their fields were obtained.  In particular,
an explicit bound ($\le 56$) of degree of the ground field of any
arithmetic hyperbolic reflection group in
dimension $n \ge 6$ was obtained.

Here we continue this study for smaller dimensions $n=5$ and $4$.
Using similar methods, we define some
explicit finite sets of totally real algebraic number fields
containing all ground fields of arithmetic hyperbolic reflection groups
in dimensions $n \ge 4$. Moreover, an explicit bound ($\le 138$)
of degrees of fields from these sets are obtained. This
requires much more difficult considerations comparing to $n\ge 6$,
and it is very surprising to the author that this can be done.
Thus, degree of the ground field of any arithmetic hyperbolic reflection
group of dimension $n\ge 4$ is bounded by $138$.

It is also very important that all these fields are attached to fundamental
chambers of arithmetic hyperbolic reflection groups, and they can be further
geometrically investigated and restricted.

It was also shown in \cite{Nik6} (using results of \cite{LMR} and
\cite{Borel}, \cite{Tak4}) that degree of the ground field of
any arithmetic hyperbolic reflection group of dimension $n=2$ is
bounded by $44$. Thus, an explicit bound of degree
of ground fields of arithmetic hyperbolic reflection groups remains
unknown in dimension $n=3$ only. Finiteness of the number of
maximal arithmetic hyperbolic reflection groups (and then a theoretical
existence of this bound) was shown by Agol \cite{Agol}.

Since this paper is a direct continuation of \cite{Nik6}, we use
notations, definitions and results of this paper without their reminding.

In \cite{Vin2} (1981) and \cite{Vin3}, \'Ernest Borisovich Vinberg
gave a very short list of all possible ground fields
(thirteen fields) of arithmetic
hyperbolic reflection groups in dimensions $n\ge 14$. Thus, the results
of \cite{Nik6} and of this paper can be viewered as some extension of these
beautiful Vinberg's results to smaller dimensions.


\section{Ground fields of arithmetic hyperbolic reflection
groups in dimensions $n\ge 4$}
\label{sec:5and4}

Since this paper is a direct continuation of \cite{Nik6}, we use
notations, definitions and results of this paper without their reminding.

In \cite[Secs 3 and 4]{Nik6}, explicit finite sets
$\F L^4$, $\F T$, $\F\Gamma^{(4)}_i(14)$, $1\le i\le 5$,
and $\F\Gamma_{2,4}(14)$ of totally real algebraic number fields
were defined. The set $\F L^4$ consists of all ground fields of arithmetic
Lann\'er diagrams with $\ge 4$ vertices and consists of three fields of
degree $\le 2$. The set $\F T$ consists of all ground fields of arithmetic
triangles (plane) and has 13 fields of degree $\le 5$ (it includes $\F L^4$).
The set $\F\Gamma^{(4)}_i(14)$, $1\le i\le 5$, consists of all
ground fields of V-arithmetic edge polyhedra of minimality $14$ with
connected Gram graph having 4 vertices. They are determined by $5$ types
of graphs $\Gamma^{(4)}_i(14)$, $i=1,2,3,4,5$. The degrees of fields from
these sets are bounded by 22, 39, 53, 56, 54 respectively.
The set $\F\Gamma_{2,4}(14)$ consists of all ground fields of arithmetic
quadrangles (plane) of minimality $14$. Their degrees are bounded by 11.


The following result was obtained in \cite[Theorem 4.5]{Nik6}
using methods of \cite{Nik1} and \cite{Nik2} and results by Borel
\cite{Borel} and Takeuchi \cite{Tak4}.

\begin{theorem} (\cite{Nik6})
In dimensions $n\ge 6$, the ground field of
any arithmetic hyperbolic reflection group belongs to one of finite
sets of fields $\F L^4$, $\F T$, $\F\Gamma^{(4)}_i(14)$, $1\le i\le 5$,
and $\F\Gamma_{2,4}(14)$. In particular, its degree is bounded by $56$.
\label{thfor6}
\end{theorem}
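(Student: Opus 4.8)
The plan is to reduce the statement to the four families of low-dimensional subconfigurations whose ground fields have already been catalogued. Since the ground field is an invariant of the commensurability class, I would first replace an arbitrary arithmetic hyperbolic reflection group in dimension $n\ge 6$ by the maximal reflection group containing it; this preserves the ground field $K$, and the fundamental chamber $\M$ of the maximal group is a finite-volume convex polyhedron in hyperbolic $n$-space. By Vinberg's arithmeticity criterion \cite{Vin2}, $K$ is generated over $\bq$ by the cyclic products of the entries of the Gram matrix of the faces of $\M$.

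First I would use the geometry of $\M$, via the methods of \cite{Nik1} and \cite{Nik2}, to locate inside it a subconfiguration of bounded complexity. The point is that once $n\ge 6$ there is enough room to find, in the narrow part of $\M$, one of the following: a compact Lann\'er cell (hence a Lann\'er diagram with $\ge 4$ vertices), a planar triangle, a V-arithmetic edge polyhedron of minimality $14$ with connected Gram graph on $4$ vertices, or an arithmetic quadrangle of minimality $14$. Here the threshold $n\ge 6$ is exactly what guarantees the existence of such a subconfiguration, and $14$ is the relevant uniform bound on the minimality of the edge configurations that arise.

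Next, in each case I would check that the ground field of $\M$ coincides with the ground field of the located subconfiguration: its faces are among the faces of $\M$, and the cyclic products that generate $K$ already occur within the subconfiguration. Hence $K$ lies in one of $\F L^4$, $\F T$, $\F\Gamma^{(4)}_i(14)$ with $1\le i\le 5$, or $\F\Gamma_{2,4}(14)$. For the planar triangle case one invokes the classification via Borel \cite{Borel} and Takeuchi \cite{Tak4}. Assembling the degree bounds $2$ and $5$ for the first two sets, $22,39,53,56,54$ for the five edge-polyhedron sets, and $11$ for the quadrangles then yields the overall bound $56$.

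The hard part will be the geometric existence statement of the second paragraph: proving that in every fundamental chamber with $n\ge 6$ one of precisely these four types of subconfigurations, with minimality at most $14$, must occur. This rests on a case analysis of the local combinatorics of $\M$ along its edges and vertices, where the dimension hypothesis and Vinberg's minimality estimate are indispensable; by comparison, the passage from the subconfiguration back to $K$ and the final numerical bound are essentially bookkeeping.
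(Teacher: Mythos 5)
You have reconstructed the right skeleton, and it is essentially the one the paper relies on: note first that this theorem is not proved in the present paper at all --- it is imported verbatim from \cite{Nik6} (Theorem 4.5 there) --- but the same strategy is carried out in full for the harder case $n\ge 4$ in the proof of Theorem \ref{thfor45}, so that is the natural comparison. There, as in your sketch, one takes the set $P(\M)$ of outward normals of square $-2$ to the fundamental chamber, uses \cite{Nik1} to produce an element $e\in P(\M)$ whose face $\M_e$ is narrow of minimality $14$, locates a $2$-dimensional face $\Delta$ of $\M_e$ with few vertices, shows that the ground field is already generated by a bounded subconfiguration around $\Delta$, and then takes the maximum of the tabulated degree bounds $2,5,22,39,53,56,54,11$ to get $56$.

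The genuine gap is that the step you yourself flag as ``the hard part'' is precisely the content of the theorem, and you supply no mechanism for it. The mechanism is combinatorial rather than a matter of ``enough room'': for a simple $(n-1)$-dimensional polytope the average number $\alpha^{(0,2)}_{n-1}$ of vertices of its $2$-dimensional faces satisfies inequality \eqref{alpha02}, which for $n\ge 6$ gives $\alpha^{(0,2)}_{n-1}<5$, so the narrow face $\M_e$ necessarily has a $2$-face $\Delta$ that is a triangle or a quadrangle --- this is exactly why pentagons, and hence the new sets $\F\Gamma^{(6)}_i(14)$, $\F\Gamma^{(7)}_i(14)$, $\F\Gamma_{2,5}(14)$ of the present paper, only enter for $n=4,5$. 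Moreover the four families in the statement are not four parallel alternatives, as your second paragraph suggests, but the output of a further case analysis of the Coxeter graphs of the vertices and the hyperbolic components of the edge graphs attached to $\Delta$ (Lemmas 4.3, 4.6, 4.7 of \cite{Nik6}): the Lann\'er fields and triangle fields arise from the triangle case, while $\F\Gamma^{(4)}_i(14)$ and $\F\Gamma_{2,4}(14)$ arise from the quadrangle case. Two smaller inaccuracies: the existence of a narrow face of minimality $14$ is Nikulin's result from \cite{Nik1}, not a ``Vinberg minimality estimate''; and passing to a maximal reflection group is unnecessary, since the argument runs directly on the given fundamental chamber.
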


Applying the same methods and similar, but much more difficult
considerations, here we want to extend this result to $n\ge 4$,
also considering $n=5$ and $n=4$.

\medskip

First, we introduce some other explicit finite sets of fields.
All of them are related to fundamental pentagons on
hyperbolic plane.  Similarly $\F T$ is related to arithmetic
triangles on hyperbolic plane, and $\F\Gamma_{2,4}(14)$ is related to
arithmetic quadrangles on hyperbolic plane.

Let us consider plane
(or Fuchsian) arithmetic hyperbolic reflection groups $W$ with a
pentagon fundamental polygon
$\Delta$ of minimality $14$. We remind that this means that the set 
$P(\Delta)=\{\delta_1,\delta_2,\delta_3,\delta_4,\delta_5\}$ of all 
perpendicular with square $(-2)$ and directed outwards vectors to 
codimension one faces of $\Delta$ satisfies the condition
$$
\delta_i\cdot \delta_j<14,\ \forall\ \delta_i,\delta_j\in P(\Delta).
$$
Respectively, we call $\Delta$ as an
{\it arithmetic pentagon of minimality $14$.}

\begin{definition} We denote by $\Gamma_{2,5}(14)$ the set of
Gram graphs $\Gamma (P(\Delta))$ of all arithmetic pentagons $\Delta$
of minimality $14$.
The set $\F\Gamma_{2,5}(14)$ consists of all their ground fields.
\label{F{2,5}(14)}
\end{definition}

By Borel \cite{Borel} and Takeuchi \cite{Tak4}, for fixed $g\ge 0$
and $t\ge 0$,
the number of arithmetic Fuchsian groups with signatures
$(g;\ e_1,\,e_2,\,\dots ,\, e_t)$
is finite. Applying this result to $g=0$ and $t=5$, we obtain that sets of
arithmetic quadrangles $\Gamma_{2,5}$ and their ground fields $\F \Gamma_{2,5}$
are finite. Then their subsets $\Gamma_{2,5}(14)$ and
$\F\Gamma_{2,5}(14)$ are also finite.

Moreover, in \cite[pages 383--384]{Tak4} an upper bound $n_0$ of the degree
of ground fields of Fuchsian groups with signatures
$(g;\ e_1,\,e_2,\,\dots,\, e_t)$ is given. It is
\begin{equation}
n_0=(b+\log_e{C(g,t)})/\log_e(a/(2\pi)^{4/3})
\label{Tak1}
\end{equation}
where
$$
a=29.099,\ b=8.3185,\ C(g,t)=2^{2g+t-2}(2g+t-2)^{2/3}
$$
(here $a$ and $b$ are due to Odlyzko).
It follows that
\begin{equation}
[\bk:\bq]\le 12\ \ {\rm for\ \ } \bk\in \F\Gamma_{2,5}\supset 
\F\Gamma _{2,5}(14)
\label{degF2,4(14)}.
\end{equation}

\begin{figure}
\begin{center}
\includegraphics[width=12cm]{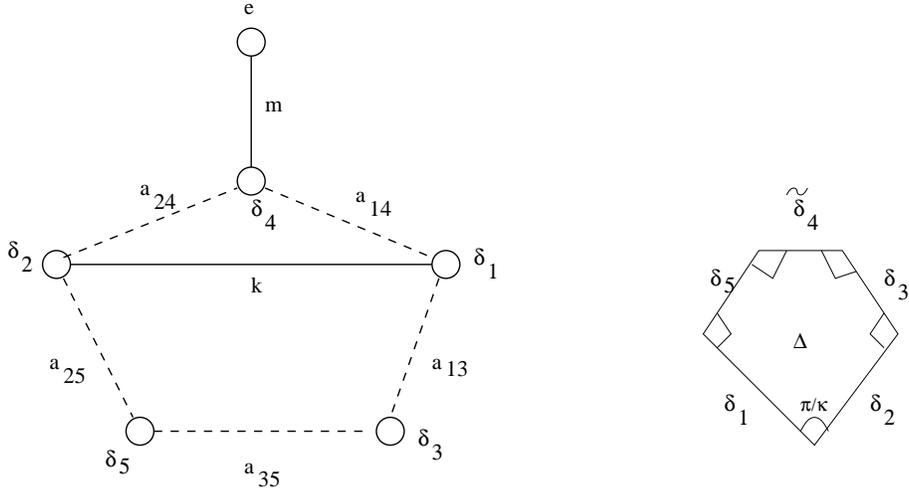}
\end{center}
\caption{The pentagon graphs $\Gamma_1^{(6)}$}
\label{pentgraph61}
\end{figure}

Let us consider V-arithmetic 3-dimensional chambers
which are defined by the Gram graphs $\Gamma^{(6)}_1$ with 6
vertices $\delta_1,\dots,\delta_5, e$
shown in Figure \ref{pentgraph61}.
It follows that the corresponding V-arithmetic chamber $\M$ satisfies
the following condition: the 2-dimensional face $\M_e$ of $\M$ which is
perpendicular to $e$ is a pentagon $\Delta$ where
$$
P(\Delta)=\{\delta_1,\delta_2,\delta_3, \widetilde{\delta}_4,
\delta_5\}
$$
for
$$
\widetilde{\delta}_4=\frac{\delta_4+\cos{(\pi/m)}e}{\sin{(\pi/m)}},\
$$
are perpendicular to 5 consecutive sides of the pentagon.
It has one angle $\pi/k$ (defined by $\delta_1,\delta_2$) and all other its
angles are right ($=\pi/2$). Thus, all planes $\Hh_{\delta_i}$, $1\le i\le 5$,
are perpendicular to $\Hh_e$ except $\Hh_{\delta_4}$ which has angle $\pi/m$,
$m\ge 3$, with the plane $\Hh_e$.

\begin{definition} We denote by $\Gamma^{(6)}_1(14)$ the set of
all V-arithmetic 3-dimensional graphs $\Gamma^{(6)}_1$ (or the corresponding
3-dimensional V-arithmetic chambers) of minimality $14$.
Thus, inequalities $2<a_{ij}<14$ satisfy.
We denote by $\F \Gamma^{(6)}_1(14)$ the set of all their ground fields.
\label{defpentgamma61fields}
\end{definition}

Similarly, we define V-arithmetic graphs $\Gamma^{(6)}_2$,
$\Gamma^{(6)}_3$ given in Figure \ref{pentgraphs}
which must (by definition) define 3-dimensional
V-arithmetic chambers $\M$. For both of them, the face $\M_e$ of $\M$
which is perpendicular to $e$,
is a pentagon $\Delta$ with right angles. For $\Gamma^{(6)}_2$,
$$
P(\Delta)=\{\widetilde{\delta}_1, \delta_2,\delta_3, \delta_4, \delta_5\}
$$
correspond to five consecutive sides of $\Delta$ where
$$
\widetilde{\delta}_1=\frac{\delta_1+\cos{(\pi/m)}e}{\sin{(\pi/m)}}.\
$$
For $\Gamma^{(6)}_3$,
$$
P(\Delta)=\{\widetilde{\delta}_1, \delta_2,\widetilde{\delta_3},
\delta_4, \delta_5\}
$$
correspond to five consecutive sides of $\Delta$ where
$$
\widetilde{\delta}_1=\frac{\delta_1+\cos{(\pi/m_1)}e}{\sin{(\pi/m_1)}},\ \ \
\widetilde{\delta}_3=\frac{\delta_3+\cos{(\pi/m_3)}e}{\sin{(\pi/m_3)}}.
$$
V-arithmetic graphs $\Gamma^{(7)}_1$ and $\Gamma^{(7)}_2$ of Figure
\ref{pentgraphs} must (by definition) define 4-dimensional V-arithmetic
fundamental chambers $\M$. For both of them, the 2-dimensional
face $\M_{e_1,e_2}$ of $\M$ which is perpendicular to both $e_1$ and $e_2$,
is a pentagon $\Delta$ with right angles. For $\Gamma^{(7)}_1$,
$$
P(\Delta)=\{\widetilde{\delta}_1, \widetilde{\delta_2},
\delta_3, \delta_4, \delta_5\}
$$
correspond to five consecutive sides of $\Delta$ where
$$
\widetilde{\delta}_1=\frac{\delta_1+\cos{(\pi/m_1)}e_1}{\sin{(\pi/m_1)}},\ \ \
\widetilde{\delta}_2=\frac{\delta_2+\cos{(\pi/m_2)}e_2}{\sin{(\pi/m_2)}}.
$$
For $\Gamma^{(7)}_2$,
$$
P(\Delta)=\{\widetilde{\delta}_1, \widetilde{\delta}_2,
\widetilde{\delta}_3, \delta_4, \delta_5\}
$$
correspond to five consecutive sides of $\Delta$ where
$$
\widetilde{\delta}_1=\frac{\delta_1+\cos{(\pi/m_1)}e_1}{\sin{(\pi/m_1)}},\ \ \
\widetilde{\delta}_2=\frac{\delta_2+\cos{(\pi/m_2)}e_2}{\sin{(\pi/m_2)}},
\ \ \
\widetilde{\delta}_3=\frac{\delta_3+\cos{(\pi/m_3)}e_1}{\sin{(\pi/m_3)}}.\ \ \
$$

\begin{figure}
\begin{center}
\includegraphics[width=12cm]{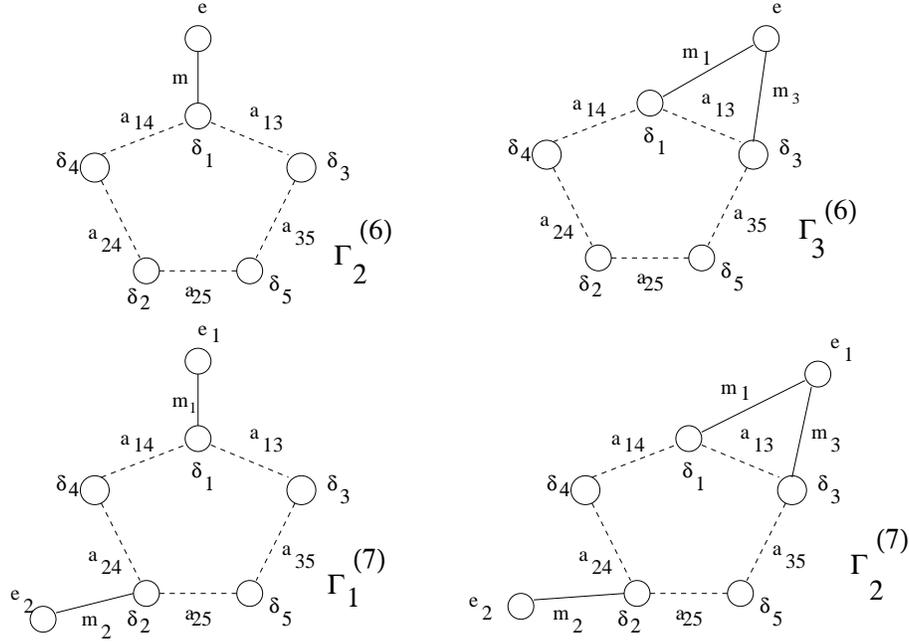}
\end{center}
\caption{The pentagon graphs $\Gamma_2^{(6)}$, $\Gamma_3^{(6)}$,
$\Gamma_1^{(7)}$, $\Gamma_2^{(7)}$}
\label{pentgraphs}
\end{figure}

\begin{definition} We denote by $\Gamma^{(6)}_2(14)$,
$\Gamma^{(6)}_3(14)$, $\Gamma^{(7)}_1(14)$ and $\Gamma^{(7)}_2(14)$
the sets of all V-arithmetic graphs
$\Gamma^{(6)}_2$, $\Gamma^{(6)}_3$, $\Gamma^{(7)}_1$ and
$\Gamma^{(7)}_2$ respectively (or the corresponding
3-dimensional or 4-dimensional V-arithmetic fundamental chambers)
of minimality $14$.
Thus, inequalities $2<a_{ij}<14$ satisfy.
We denote by
$\F\Gamma^{(6)}_2(14)$,
$\F\Gamma^{(6)}_3(14)$, $\F\Gamma^{(7)}_1(14)$ and
$\F\Gamma^{(7)}_2(14)$ the sets of all their ground fields respectively.
\label{defpentagonfields}
\end{definition}

We have

\begin{theorem} The sets of V-arithmetic graphs
$\Gamma^{(6)}_1(14)$,
$\Gamma^{(6)}_2(14)$, $\Gamma^{(6)}_3(14)$, $\Gamma^{(7)}_1(14)$,
$\Gamma^{(7)}_2(14)$ and their fields $\F\Gamma^{(6)}_1(14)$,
$\F\Gamma^{(6)}_2(14)$,
$\F\Gamma^{(6)}_3(14)$, $\F\Gamma^{(7)}_1(14)$,
$\F\Gamma^{(7)}_2(14)$ are finite.

Degree of any field from $\F\Gamma^{(6)}_1(14)$ is bounded ($\le$) by $56$.

Degree of any field from $\F\Gamma^{(6)}_2(14)$ is bounded by $75$.

Degree of any field from  $\F\Gamma^{(6)}_3(14)$ is bounded by $138$.

Degree of any field from $\F\Gamma^{(7)}_1(14)$ is bounded by $42$.

Degree of any field from  $\F\Gamma^{(7)}_2(14)$ is bounded by $138$.
\label{thpentgraphs}
\end{theorem}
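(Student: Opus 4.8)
The plan is to treat all five graphs by one scheme: pass from the chamber $\M$ to its distinguished pentagonal face, show that this face lies in the already finite set $\Gamma_{2,5}(14)$, and bound the remaining dihedral parameters $m$ (resp.\ $m_1,m_2,m_3$) arithmetically. The degree of the ground field $\bk$ of $\M$ is then estimated through the degree of the pentagon's field together with the degrees of the totally real cyclotomic numbers $\cos(\pi/m_i)$.

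For finiteness, I would first note that in each case the defining conditions force the face $\M_e$ (resp.\ $\M_{e_1,e_2}$) perpendicular to $e$ (resp.\ to $e_1,e_2$) to be an arithmetic pentagon $\Delta$ of minimality $14$, so $\Gamma(P(\Delta))\in\Gamma_{2,5}(14)$; by Borel--Takeuchi this set and $\F\Gamma_{2,5}(14)$ are finite, and every $\bk\in\F\Gamma_{2,5}$ has degree $\le 12$ by \eqref{degF2,4(14)}. It then suffices to bound the angles. The normalisation $\widetilde\delta^{\,2}=-2$ forces $\delta\cdot e=2\cos(\pi/m)$, while each Gram entry of $\Delta$ involving some $\widetilde\delta$ equals the chamber product $a_{ij}$ divided by $\sin(\pi/m)$. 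Substituting into the minimality condition of $\Delta$ (every product of two normals is $<14$) and using $a_{ij}>2$ for the relevant diverging pair gives $\sin(\pi/m_i)>1/7$, hence an explicit bound on each $m_i$. Finitely many pentagons and finitely many admissible angle vectors then make all the sets $\Gamma^{(6)}_j(14)$, $\Gamma^{(7)}_j(14)$ and their fields finite.

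For the degree bounds I would argue field-theoretically. Over the subfield $F_0$ generated by the $e$-perpendicular products one has $\bk=F_0(\cos(\pi/m_i),a_{ij})$, whereas the pentagon field is $\bk_\Delta=F_0(a_{ij}/\sin(\pi/m_i))$; hence both sit inside $L=\bk(\sin(\pi/m_i))=\bk_\Delta(\cos(\pi/m_i))$, which is again totally real since $\sin(\pi/m)=\cos\!\big((m-2)\pi/(2m)\big)$ is a cyclotomic cosine. Combining $[\bk_\Delta:\bq]\le 12$ with the relation between $\bk$, $\bk_\Delta$ and $L$ expresses $[\bk:\bq]$ through $[\bk_\Delta:\bq]$ and the degrees of the $\cos(\pi/m_i)$; since the five graphs differ only in the number and placement of the $\widetilde\delta$'s, this reduces the theorem in each case to a sufficiently sharp upper bound on the admissible $m_i$.

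The hard part is exactly this sharp bound, most acutely for $\Gamma^{(6)}_3$ and $\Gamma^{(7)}_2$, which realise the largest value $138$: there two (resp.\ three) independent angles enter at once, the fields $\bq(\cos(\pi/m_j))$ need not be linearly disjoint over $\bk_\Delta$, and using only $\sin(\pi/m_i)>1/7$ would allow $m_i$ as large as $21$ and hence vastly overshoot $138$. To obtain the stated constants $56,75,138,42,138$ I would use the arithmeticity in full: at every non-identity real place $\sigma$ the Gram form is negative definite, so $|\sigma(a_{ij})|\le 2$ and, for the pentagon normals $p\in P(\Delta)$, $|\sigma(p\cdot p')|\le 2$; imposing these inequalities simultaneously at all $\sigma$---together with the Odlyzko-type discriminant bounds underlying \eqref{Tak1}---eliminates the large-$m_j$ combinations that the crude count leaves open. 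Balancing the several $\sin(\pi/m_j)$ and $\cos(\pi/m_j)$ against the minimality bound $a_{ij}<14$ at the geometric place and the definiteness bound $|a_{ij}^{\sigma}|\le 2$ elsewhere is the delicate estimate on which the theorem turns.
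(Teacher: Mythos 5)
There is a genuine gap, and it occurs at the very first step of your reduction. The pentagonal face $\M_e$ (resp.\ $\M_{e_1,e_2}$) of a V-arithmetic chamber of type $\Gamma^{(6)}_j$ or $\Gamma^{(7)}_j$ is \emph{not} an arithmetic pentagon of minimality $14$, so you cannot place its Gram graph in $\Gamma_{2,5}(14)$ nor invoke Borel--Takeuchi and the degree bound $\le 12$ of \eqref{degF2,4(14)}. The minimality condition $2<a_{ij}<14$ is imposed on the chamber products $\delta_i\cdot\delta_j$, whereas the pentagon's Gram entries are $b_{ij}=a_{ij}/\sin(\pi/m)$ (or worse, involve products of several $1/\sin(\pi/m_i)$ and cross terms with $\cos(\pi/m_i)$, cf.\ \eqref{equ63}, \eqref{equ72}); these are unbounded as $m\to\infty$ and need not even be of the form arising from a fundamental polygon of an arithmetic Fuchsian group. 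For the same reason your inequality $\sin(\pi/m_i)>1/7$, hence $m_i\le 21$, is false: you have applied the bound $<14$ to the $b_{ij}$ instead of the $a_{ij}$. The paper's own computation shows the extremal configurations occur at $m=151$ for $\Gamma^{(6)}_2$, at $(k,s)=(139,5)$ for $\Gamma^{(6)}_3$, and at $\{k,m\}=\{3,113\}$ for $\Gamma^{(6)}_1$ --- all far beyond $21$ --- so any argument that caps the $m_i$ at $21$ cannot produce the constants $56$, $75$, $138$, $42$, $138$, and the composite-field count $[\bk_\Delta:\bq]\cdot\prod\varphi(m_i)/2$ built on that cap collapses with it.

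What actually drives the proof is absent from your proposal. One constructs a single totally real algebraic integer $\alpha$ generating $\bk$ out of the chamber products $a_{ij}$ (see \eqref{alphagam16} and \eqref{alpha}), and shows that at every non-geometric embedding $\sigma$ it satisfies $0<|\sigma(\alpha)|<a\,\sigma\bigl(\sin^2(\pi/m_1)\cdots\bigr)$ while at the geometric place it is bounded by a power of $14$. The essential input for the right-angled pentagon graphs is Lemma \ref{mingamma}: for a hyperbolic right-angled pentagon the product $\gamma=b_{13}b_{14}b_{24}b_{25}b_{35}$ is pinched between $-(\sqrt 5-1)^5$ and $0$ at non-geometric places, the extremum being attained by the regular pentagon; this is what makes the constant $a$ in \eqref{case1.cond1sigma}, \eqref{case2.cond1sigma} small enough ($\gamma_0<4$, $2\gamma_0<16$) for the method to run. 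Then the norm inequality $|N_{\bk/\bq}(\alpha)|\ge 1$ combined with $N_{\bff_l/\bq}(4\sin^2(\pi/l))=\gamma(l)$ (Method B), supplemented by the Fekete-type Theorems \ref{th121}--\ref{th122} (Method A) for the exceptional $l$ or $(k,s)$, simultaneously bounds the degree $N$ and the admissible angle parameters. Your closing paragraph correctly senses that negative definiteness at all $\sigma\ne\sigma^{(+)}$ must be exploited globally, but without the extremal lemma and the norm machinery that intuition does not yield a proof.
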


\begin{proof} The proof requires long considerations and calculations. 
It will be given in a special Sect. \ref{sec:pentgraphs}.
\end{proof}

We have the following main result of the paper.

\begin{theorem} In dimensions $n\ge 4$, the ground field of
any arithmetic hyperbolic reflection group belongs to one of finite
sets of fields $\F L^4$, $\F T$, $\F\Gamma^{(4)}_i(14)$, $1\le i\le 5$,
$\F\Gamma_{2,4}(14)$ and
$\F\Gamma^{(6)}_1(14)$,
$\F\Gamma^{(6)}_2(14)$,
$\F\Gamma^{(6)}_3(14)$, $\F\Gamma^{(7)}_1(14)$,\linebreak
$\F\Gamma^{(7)}_2(14)$,
$\F \Gamma_{2,5}(14)$.

In particular, its degree is bounded by $138$.
\label{thfor45}
\end{theorem}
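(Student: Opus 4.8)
The plan is to reduce the general statement about arithmetic hyperbolic reflection groups in dimensions $n \ge 4$ to the finitely many combinatorial-arithmetic configurations enumerated in the earlier definitions, and then invoke Theorem~\ref{thpentgraphs} together with the already-established results for $n \ge 6$ (Theorem~\ref{thfor6}) and for the planar cases. Following the method of \cite{Nik6}, I would start with an arithmetic hyperbolic reflection group $W$ acting in dimension $n \ge 4$, with fundamental chamber $\M$ (a Coxeter polytope in hyperbolic $n$-space). The core idea is that the ground field of $W$ is determined by the Gram matrix of the outward normals $\delta_i$ to the faces of $\M$, and that one can extract from $\M$ a low-dimensional sub-configuration whose Gram graph already pins down (or contains) the ground field. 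Since Theorem~\ref{thfor6} settles $n \ge 6$, the genuinely new work is for $n = 5$ and $n = 4$.

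The key steps would be: first, for $n = 5$ or $n = 4$, locate inside the fundamental chamber $\M$ a suitable face or edge configuration of small dimension (dimension $3$ or $4$) whose Gram graph is one of $\Gamma^{(6)}_1$, $\Gamma^{(6)}_2$, $\Gamma^{(6)}_3$, $\Gamma^{(7)}_1$, $\Gamma^{(7)}_2$, or whose associated planar polygon is a pentagon (giving $\F\Gamma_{2,5}(14)$), quadrangle, or triangle. This uses the narrowness/minimality considerations of \cite{Nik6}: after passing to the minimality-$14$ normalization, the relevant local pieces of $\M$ are forced into one of the enumerated graph types with all edge weights $a_{ij}$ satisfying $2 < a_{ij} < 14$. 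Second, I would verify that the ground field of the whole group $W$ coincides with the ground field attached to this extracted sub-configuration, so that $\bk(W)$ lies in the appropriate finite set $\F\Gamma^{(\ast)}_\ast(14)$, $\F\Gamma_{2,5}(14)$, $\F\Gamma_{2,4}(14)$, $\F T$, or $\F L^4$. Third, having shown membership in the union of these finite sets, finiteness of the whole collection follows from the finiteness of each constituent (Theorem~\ref{thpentgraphs} and the Borel--Takeuchi finiteness cited in the excerpt), and the degree bound $138$ follows by taking the maximum of the individual bounds ($56$, $75$, $138$, $42$, $138$, and the planar bounds $\le 12$).

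The main obstacle, and the heart of the argument, is the combinatorial reduction step for $n = 4$ and $n = 5$: showing that \emph{every} fundamental chamber in these dimensions necessarily produces one of the listed pentagon-type graphs or planar polygons. In the higher-dimensional case $n \ge 6$ there is more room to find a clean sub-configuration; in dimensions $4$ and $5$ the available faces are fewer and the pentagonal cases (rather than just triangles and quadrangles) genuinely arise, which is exactly why the new graphs $\Gamma^{(6)}_i$ and $\Gamma^{(7)}_j$ had to be introduced. I expect the detailed case analysis to require tracking which two-dimensional faces of $\M$ can be pentagons and how the dihedral angles $\pi/m$, $\pi/m_i$ with the auxiliary vectors $e$, $e_1$, $e_2$ encode the embedding of the pentagon as a face of a $3$- or $4$-dimensional V-arithmetic chamber. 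Once this enumeration is complete and exhaustive, the remaining assembly into the stated finite sets and the extraction of the bound $138$ are formal.
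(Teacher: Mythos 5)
Your outline follows the same route as the paper: dispose of $n\ge 6$ via Theorem~\ref{thfor6}, then for $n=4,5$ extract from the fundamental chamber $\M$ a low-dimensional configuration whose Gram graph lands in one of the listed finite sets, and obtain $138$ as the maximum of the individual degree bounds. However, the step you yourself flag as ``the main obstacle'' is left as an expectation rather than an argument, and it is precisely here that two concrete inputs are needed; without them the reduction is not exhaustive.

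First, you give no reason why a polygon with at most $5$ vertices must appear at all. The paper gets this by taking the $e\in P(\M)$ from \cite{Nik1} that defines a narrow face $\M_e$ of minimality $14$, noting that $\bk\ne\bq$ forces $\M$ (hence $\M_e$) to be compact and simple, and then applying the inequality $\alpha_{n-1}^{(0,2)}<6$ of \eqref{alpha02} (or, for $n=4,5$, simply Euler's formula on a $3$-dimensional face) to produce a $2$-face $\Delta$ of $\M_e$ with $\le 5$ vertices; a priori every $2$-face could be a hexagon or worse, and then none of your enumerated graphs would arise. Second, to make the pentagon enumeration exhaustive one must first control how many normals can fail to be orthogonal to the plane of $\Delta$; the paper does this via \cite[Lemma 4.3]{Nik6} (applicable once $\bk$ is assumed outside the earlier sets), which restricts the Coxeter graphs of vertices and edges of $\M_e$, and only then does the trichotomy --- $Q\perp\Delta$ entirely (giving $\F\Gamma_{2,5}(14)$), exactly one non-right angle (giving $\Gamma^{(6)}_1(14)$), or all right angles with non-orthogonal members of $Q$ (giving $\Gamma^{(6)}_2(14)$, $\Gamma^{(6)}_3(14)$, $\Gamma^{(7)}_1(14)$, $\Gamma^{(7)}_2(14)$) --- cover all cases. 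With these two ingredients supplied, the rest of your assembly (finiteness from Theorem~\ref{thpentgraphs} and Borel--Takeuchi, and the bound $138$ as the maximum) matches the paper.
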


\begin{proof} By \cite{Nik6}, if $n\ge 6$, the ground field $\bk$
belongs to
one of sets $\F L^4$, $\F T$, $\F\Gamma^{(4)}_i(14)$, $1\le i\le 5$,
$\F\Gamma_{2,4}(14)$. Thus, further we can assume that the
ground field $\bk$ does not belong to these sets,
and the dimension is equal to $n=4$ or $n=5$.

Let $W$ be an arithmetic hyperbolic reflection group of dimension $n=4$ or
$5$, $\M$ is its fundamental chamber, and $P(\M)$ is the set of all
vectors with square $(-2)$ which are perpendicular to codimension one faces
of $\M$ and directed outwards. For $\delta \in P(\M)$ we denote by
$\Hh_\delta$ and  $\M_\delta$ the hyperplane and the codimension one face
$\M\cap \Hh_e$ respectively which is perpendicular to $e$.

By \cite{Nik1}, there exists $e\in P(\M)$ which defines
a narrow  face $\M_e$ of $\M$ of minimality $14$.
It means that $\delta_1\cdot \delta_2<14$ for any
$\delta_1,\delta_2\in P(\M,e)\subset P(\M)$. Here
$$
P(\M,e)=\{\delta \in P(\M)\ |\ \Hh_\delta\cap \Hh_e\not=\emptyset\}.
$$

The field $\bk$ is different from $\bq$. It is well-known that then $\M$
is compact, and it is simple: any vertex of $\M$ is contained in exactly
$n$ codimension one faces. Then $\M_e$ is also simple $n-1$-dimensional
polyhedron. If $n=5$, it is 4-dimensional, and if $n=4$, it is 3-dimensional.

By formula \cite[(7)]{Nik6} which was proved in \cite{Nik2}, the average
number $\alpha_{n-1}^{(0,2)}$ of vertices of 2-dimensional faces of any
simple $n-1$-dimensional polyhedron satisfies for $n\ge 4$ the inequality
\begin{equation}
\alpha_{n-1}^{(0,2)}<4+
\left\{
\begin{array}{rcl}
\frac{4}{n-2} & {\rm if } &\  n\ {\rm is\ even,}\\
\frac{4}{n-3} & {\rm if}   &\  n\ {\rm is\ odd.}
\end{array}
\right.
\label{alpha02}
\end{equation}
It follows that $\alpha_{n-1}^{(0,2)}<6$, and $\M_e$ has a 2-dimensional
face which is a polygon $\Delta$ with $\le 5$ vertices.
Remark that the same can be obtained by Euler's formula for Euler
characteristic $b_0-b_1+b_2=2$ applied
to any 3-dimensional face of $\M_e$. Thus, for
$n=4$ and $n=5$, actually, we don't need the inequality \eqref{alpha02}.

If $\Delta$ is a triangle, then obviously $\bk$ belongs to $\F L^4$ or $\F T$
(see \cite[Lemma 4.6]{Nik6}).
If $\Delta$ is a quadrangle and $\bk$ is different from $\F L^4$,
$\F T$ and $\F \Gamma_i^{(4)}(14)$, $1\le i\le 5$, then $\bk$ belongs to
$\F\Gamma_{2,4}(14)$, see \cite[Lemma 4.7]{Nik6}. Thus, further, we can
assume that $\Delta$ is a pentagon.

By \cite[Lemma 4.3]{Nik6}, if $\bk$ does not belong to
$\F L^4$, $\F T$ and $\F \Gamma_i^{(4)}(14)$, $1\le i\le 4$,
then the Coxeter graph $C(v)$ of any vertex $v\in \M_e$
has all connected components having only one or
two vertices. If additionally $\bk$ does not belong to
$\F \Gamma_5^{(5)}(14)$, then the hyperbolic connected component of
the edge graph $\Gamma(r)$ defined by any edge
$r=v_1v_2\subset \M_e$ has $\le 3$ vertices.

By direct check (see the proof of \cite[Lemma 4.7]{Nik6}), we see that then
there are only 3 cases where we denote by $Q\subset P(\M)$ the set of all
perpendicular to $\Delta$ elements $\delta \in P(\M)$, and by
$\delta_i\in P(\M)-Q$, $i=1,2,3,4,5$, perpendicular vectors to five
consecutive edges of $\Delta$. Thus, $Q$ consists of $n-2$ elements, and the
plane of $\Delta$ is intersection of hyperplanes
$\Hh_\delta$, $\delta\in Q$. Moreover, $\Hh_{\delta_i}\cap \Delta$,
$i=1,2,3,4,5$,  give lines of five consecutive edges of $\Delta$.

{\bf Case 1.} $Q\perp \delta_i$, $1\le i\le 5$. Then $\Delta$ is a fundamental
pentagon of minimality $14$ for arithmetic hyperbolic reflection group with
$P(\Delta)=\{\delta_1,\delta_2,\delta_3, \delta_4,\delta_5\}$. Then $\bk$
is the ground field of this group and $\bk\in \F\Gamma_{2,5}(14)$.

{\bf Case 2.} $Q$ is not perpendicular to all $\delta_i$, $1\le i\le 5$, and
$\Delta$ has a non-right angle. We can assume that the vertex of
this angle is perpendicular to $\delta_1$ and $\delta_2$. Thus,
$\delta_1\cdot \delta_2=2\cos(\pi/k)$, $k\ge 3$. Then all other angles of
$\Delta$ are $\pi/2$. There exists exactly one element $e\in Q$ which
is not perpendicular all $\delta_i$, $1\le i\le 5$. Then $e$ is perpendicular
to all $\delta_i$ except $\delta_4$, and $e\cdot \delta_4=2\cos(\pi/m)$,
$m\ge 3$. Thus, $e$ and $\delta_i$, $1\le i\le 5$, define the hyperbolic
graph $\Gamma_1^{(6)}(14)$, and $\bk$ is the ground field of this
hyperbolic graph. (This case is very similar to \cite[Lemma 4.7]{Nik6}.)

{\bf Case 3.}
$Q$ is not perpendicular to all $\delta_i$, $1\le i\le 5$, but all angles of
$\Delta$ are right. Then $\delta_i$, $1\le i\le 5$, and elements from $Q$
which are not perpendicular to all these elements define one of
hyperbolic graphs
$\Gamma^{(6)}_2(14)$, $\Gamma^{(6)}_3(14)$, $\Gamma^{(7)}_1(14)$,
$\Gamma^{(7)}_2(14)$. The field $\bk$ is the ground field of one of these
hyperbolic graphs.

We leave the corresponding routine and simple check to a reader.

This finishes the proof of the theorem. \end{proof}


\section{V-arithmetic pentagon graphs $\Gamma^{(6)}_1(14)$,
$\Gamma^{(6)}_2(14)$,
$\Gamma^{(6)}_3(14)$, $\Gamma^{(7)}_1(14)$, $\Gamma^{(7)}_2(14)$
and their fields}\label{sec:pentgraphs}

Here we prove Theorem \ref{thpentgraphs}.

\subsection{Some general results.}
\label{subsec:genresults} We use the following general results
from \cite{Nik2}.

\begin{theorem} (\cite[Theorem 1.2.1]{Nik2}) Let $\bff$ be a totally
real algebraic number field, and let each embedding
$\sigma:\bff\to \br$ corresponds to an interval
$[a_\sigma,b_\sigma]$ in $\br$ where
$$
\prod_{\sigma }{\frac{b_\sigma-a_\sigma}{4}}<1.
$$
In addition, let the natural number $m$ and the intervals
$[s_1,t_1],\dots, [s_m,t_m]$ in $\br$ be fixed. Then there exists
a constant $N(s_i,t_i)$ such that, if $\alpha$ is a totally real
algebraic integer and if the following inequalities hold for the
embeddings $\tau:\bff(\alpha) \to \br$:
$$
s_i\le \tau(\alpha)\le t_i\ \ for\ \ \tau=\tau_1,\dots ,\tau_m,
$$
$$
a_{\tau | \bff}\le \tau(\alpha)\le b_{\tau | \bff}\ \ for\ \
\tau\not=\tau_1,\dots,\tau_m,
$$
then
$$
[\bff(\alpha):\bff]\le N(s_i,t_i).
$$
\label{th121}
\end{theorem}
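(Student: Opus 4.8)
The plan is to run a relative version of the Fekete--Szeg\H{o} (transfinite diameter) argument, using as arithmetic input the discriminant of the minimal polynomial of $\alpha$ over $\bff$: it is a nonzero algebraic integer of $\bff$, so its norm to $\bq$ has absolute value at least $1$, while the interval hypotheses force it to be analytically small once the degree is large. The strict inequality $\prod_\sigma\frac{b_\sigma-a_\sigma}{4}<1$ is what makes the two sides incompatible for large degree.

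First I would fix notation for the conjugates. Put $d=[\bff(\alpha):\bff]$ and let $P(x)=\prod_{j=1}^{d}(x-\alpha^{(j)})\in\Oc_\bff[x]$ be the monic minimal polynomial of $\alpha$ over $\bff$; its coefficients lie in $\Oc_\bff$ since $\alpha$ is an algebraic integer and $\Oc_\bff$ is integrally closed. Let $\sigma_1,\dots,\sigma_r$ with $r=[\bff:\bq]$ be the real embeddings of $\bff$, and for each $\sigma$ let $\alpha_\sigma^{(1)},\dots,\alpha_\sigma^{(d)}$ denote the real roots of the polynomial $\sigma(P)$ obtained by applying $\sigma$ to the coefficients; these are exactly the values $\tau(\alpha)$ as $\tau$ ranges over the $d$ extensions of $\sigma$ to $\bff(\alpha)$. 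The hypotheses assert that every $\alpha_\sigma^{(j)}$ lies in the corresponding interval $[a_\sigma,b_\sigma]$, with exactly $m$ exceptions $\tau_1,\dots,\tau_m$ whose values lie in the fixed intervals $[s_1,t_1],\dots,[s_m,t_m]$.

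Next I would pass to the discriminant. Since $\bff(\alpha)$ is totally real, each $\sigma(P)$ splits over $\br$, so $\discr(P)=\prod_{j<j'}(\alpha^{(j)}-\alpha^{(j')})^2$ is a nonzero totally positive element of $\Oc_\bff$ and
$$
N_{\bff/\bq}(\discr P)=\prod_{\sigma}\ \prod_{j<j'}\bigl(\alpha_\sigma^{(j)}-\alpha_\sigma^{(j')}\bigr)^2\ \ge\ 1 .
$$
I would then bound each inner factor by the transfinite diameter of the relevant interval. Recall Fekete's theorem: the $d$-th diameter $\delta_d(E)=\max_{x_i\in E}\prod_{i<j}|x_i-x_j|^{2/(d(d-1))}$ decreases to the logarithmic capacity of $E$, and for an interval $\operatorname{cap}([a,b])=(b-a)/4$. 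For an embedding $\sigma$ whose roots all lie in $[a_\sigma,b_\sigma]$ this gives $\prod_{j<j'}(\alpha_\sigma^{(j)}-\alpha_\sigma^{(j')})^2\le\delta_d([a_\sigma,b_\sigma])^{d(d-1)}$, whose logarithm is $d^2\log\frac{b_\sigma-a_\sigma}{4}+o(d^2)$ as $d\to\infty$, uniformly over the finitely many $\sigma$.

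The hard part will be the bookkeeping for the $m$ exceptional conjugates, since that is where the smallness of $\prod_\sigma\frac{b_\sigma-a_\sigma}{4}$ must beat the perturbation. For an embedding carrying $k_\sigma$ exceptional roots (so $\sum_\sigma k_\sigma=m$), I would split the pairwise differences into those among the $d-k_\sigma$ good roots, bounded as above by the $(d-k_\sigma)$-diameter of $[a_\sigma,b_\sigma]$, and those involving at least one exceptional root. The key point is that there are only $\binom{d}{2}-\binom{d-k_\sigma}{2}=O(k_\sigma d)$ differences of the second type, each bounded by the length of a fixed interval depending only on the $[s_i,t_i]$ and the $[a_\sigma,b_\sigma]$; hence they contribute at most $O(md)$ to $\log N_{\bff/\bq}(\discr P)$, linearly in $d$. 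Collecting the estimates gives
$$
\log N_{\bff/\bq}(\discr P)\ \le\ d^2\log\Bigl(\prod_\sigma\tfrac{b_\sigma-a_\sigma}{4}\Bigr)+o(d^2).
$$
Because the product is strictly less than $1$, its logarithm is a fixed negative constant, so the right-hand side tends to $-\infty$ and becomes negative once $d$ exceeds an explicit threshold $N(s_i,t_i)$; this contradicts $N_{\bff/\bq}(\discr P)\ge 1$. Hence $[\bff(\alpha):\bff]=d\le N(s_i,t_i)$, as claimed. The only real delicacy is to make both error terms genuinely lower order simultaneously: controlling the rate in $\delta_n\to\operatorname{cap}$ and confirming that the exceptional conjugates spoil only $O(d)$, not $O(d^2)$, of the pairwise factors.
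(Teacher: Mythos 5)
Your proof is essentially correct, but it takes a genuinely different route from the one the paper relies on. The paper does not reprove this statement: it quotes it from \cite{Nik2} and records only that the proof there rests on a variant of Fekete's 1923 theorem on the \emph{existence of a non-zero integer polynomial of bounded degree that is uniformly small on the given system of intervals}; one evaluates such an auxiliary polynomial $F$ at $\alpha$ and plays $|N(F(\alpha))|\ge 1$ against the smallness of $F$ at the non-exceptional conjugates. That is the Chebyshev-constant side of capacity theory. You work on the transfinite-diameter side instead: discriminant of the minimal polynomial, $N_{\bff/\bq}(\discr P)\ge 1$, and $d$-th diameters converging to $(b_\sigma-a_\sigma)/4$. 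Since transfinite diameter, Chebyshev constant and capacity coincide for compact subsets of $\br$, the hypothesis $\prod_\sigma (b_\sigma-a_\sigma)/4<1$ enters both arguments identically, and your $O(md)$ bookkeeping for the exceptional conjugates against the negative $d^2$ main term is sound. What the paper's route buys is the explicit companion bound of Theorem \ref{th122}: the quantities $M$, $R$, $B=2\sqrt{|\discr\bff|}$, $S$ and the inequality \eqref{cond for n} fall directly out of the auxiliary-polynomial construction, and the rest of the paper uses those constants numerically. Your route avoids constructing any auxiliary object, but to extract an explicit $N(s_i,t_i)$ you must make the convergence $\delta_n([a,b])\downarrow (b-a)/4$ quantitative (doable for intervals via Chebyshev polynomials), and the resulting constants would not reproduce those of Theorem \ref{th122}. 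The uniformity of the two error terms, which you flag yourself, is the only substantive point left to firm up, and it is a matter of effort rather than principle.
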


\begin{theorem} (\cite[Theorem 1.2.2]{Nik2})
Under the conditions of Theorem \ref{th121}, $N(s_i,t_i)$ can be
taken to be $N(s_i,t_i)=N_0$,
 where $N_0$ is the least natural number
solution of the inequality
\begin{equation}
N_0M\ln{(1/R)} - M\ln{(N_0+1)}-\ln{B}\ge \ln{S}. \label{cond for
n}
\end{equation}

Here
\begin{equation}
M=[\bff : \bq],\ \ \ B=2\sqrt{|{\rm discr\ } \bff|}; \label{MB}
\end{equation}
\begin{equation}
R=\sqrt{\prod_\sigma {\frac{b_\sigma-a_\sigma}{4}}},\ \ \
S=\prod_{i=1}^{m}{\frac{2er_i}{b_{\sigma_i}-a_{\sigma_i}}}
\label{RS}
\end{equation}
where
\begin{equation}
\sigma_i=\tau_i|\bff,\ \ \  r_i=\max\{{|b_i-a_{\sigma_i}|,
|b_{\sigma_i}-a_i|}\}. \label{ri}
\end{equation}
\label{th122}
\end{theorem}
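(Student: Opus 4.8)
The plan is to read \eqref{cond for n} off a quantitative version of the Fekete--Chebyshev argument underlying Theorem \ref{th121}: I would rerun that argument while tracking every constant, and show that the threshold degree it produces is exactly the least $N_0$ solving \eqref{cond for n}. Write $L=\bff(\alpha)$, $N=[L:\bff]$, and recall $M=[\bff:\bq]$ from \eqref{MB}. The $MN$ real embeddings $\tau$ of $L$ split into $M$ blocks of size $N$ according to $\sigma=\tau|\bff$, and the hypotheses of Theorem \ref{th121} say that in each block the $N$ numbers $\tau(\alpha)$ lie in $[a_\sigma,b_\sigma]$, with the one exception that for $\sigma=\sigma_i$ a single conjugate lies in $[s_i,t_i]$ instead.

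The reduction I would use is the integrality principle: it suffices to produce a monic $g\in\Oc_\bff[x]$ of degree $d$ with
\[
\prod_\sigma \sup\bigl|\sigma(g)\bigr| < 1,
\]
each supremum taken over the confinement set of $\sigma$ (the interval $[a_\sigma,b_\sigma]$, together with $[s_i,t_i]$ when $\sigma=\sigma_i$). Indeed then $g(\alpha)\in\Oc_L$ has all conjugates bounded by these suprema, so $|\mathrm{Nm}_{L/\bq}(g(\alpha))|<1$; as a nonzero algebraic integer has norm at least $1$ in absolute value, necessarily $g(\alpha)=0$ and hence $N=[\bff(\alpha):\bff]\le d$. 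Everything therefore reduces to the relative transfinite-diameter question of \emph{how small} $d$ may be while such a $g$ still exists, and the answer is governed by the logarithmic capacity $(b_\sigma-a_\sigma)/4$ of each interval, which is precisely the quantity $R$ of \eqref{RS}.

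To build $g$ I would combine a Chebyshev/Fekete extremal construction with a single application of the geometry of numbers. On each interval the relevant scale is the capacity $(b_\sigma-a_\sigma)/4$, so a monic real-coefficient polynomial of degree $d$ can be arranged to have $\prod_\sigma\sup|\sigma(g)|$ of order $R^{2d}$. To force the coefficients into $\Oc_\bff$ I would apply Minkowski's convex-body theorem once, to the lattice $\Oc_\bff$ under its Minkowski embedding of covolume $\sqrt{|\discr\bff|}$; this single step inflates the product of sup-norms by the explicit factor $B=2\sqrt{|\discr\bff|}$ of \eqref{MB} — the $2$ being the convex-body constant and $\sqrt{|\discr\bff|}$ the covolume — together with a finite-degree correction $M\ln(d+1)$ arising from the gap between the degree-$d$ extremal value and its capacity limit and from adjusting the $d+1$ coefficients. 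Finally, at $\sigma=\sigma_i$ the displaced conjugate lies in $[s_i,t_i]$; bounding $|\sigma_i(g)|$ there rather than on $[a_{\sigma_i},b_{\sigma_i}]$ costs a factor measured by $r_i$ of \eqref{ri}, and assembling these over $i=1,\dots,m$ produces exactly $S$ of \eqref{RS}.

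Taking logarithms, the existence condition $\prod_\sigma\sup|\sigma(g)|<1$ becomes
\[
dM\ln(1/R)-M\ln(d+1)-\ln B > \ln S,
\]
which is \eqref{cond for n} with $d$ in place of $N_0$. Hence once $d$ attains the least natural number $N_0$ satisfying \eqref{cond for n}, a suitable $g$ exists, forcing $g(\alpha)=0$ and $N\le N_0$; this is the claim that $N(s_i,t_i)$ may be taken equal to $N_0$. I expect the main obstacle to be the integralization in the third paragraph: passing from the real extremal polynomial to a monic $\Oc_\bff$-integral one that is simultaneously small at all $M$ archimedean places, with an \emph{explicit and clean} loss, is exactly what pins down the constant $B=2\sqrt{|\discr\bff|}$ and the correction $M\ln(d+1)$; by comparison the capacity computation giving $R$ and the displaced-conjugate bookkeeping giving $S$ are routine.
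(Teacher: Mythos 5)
Your overall strategy --- confine $g(\alpha)$ by a small integral polynomial, invoke $|N_{L/\bq}(g(\alpha))|\ge 1$ to force $g(\alpha)=0$, and read the threshold degree off a capacity/pigeonhole count --- is exactly the route the paper indicates: the paper does not reprove this theorem (it is quoted from \cite[Theorem 1.2.2]{Nik2}) and says only that the proof rests on a Fekete-type theorem about non-zero integer polynomials of bounded degree that are small on prescribed intervals. Your reduction paragraph is sound, with one correction: monicity of $g$ is never needed, since any non-zero $g\in \Oc_\bff [x]$ of degree $\le d$ with $g(\alpha)=0$ already gives $[\bff(\alpha):\bff]\le d$.

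However, the quantitative core of your sketch has genuine gaps. First, the monic constraint is an affine, non-centrally-symmetric condition on the coefficient lattice, so you cannot ``apply Minkowski's convex-body theorem once'' and remain monic; and the square root in $R=\sqrt{\prod_\sigma (b_\sigma-a_\sigma)/4}$ --- exponent $d/2$ per interval rather than the monic Chebyshev exponent $d$ --- is precisely the signature of the \emph{non-monic} pigeonhole over the full $(d+1)$-dimensional coefficient lattice, which your monic construction cannot produce. Indeed, your own accounting ($\prod_\sigma \sup|\sigma(g)|\approx R^{2d}\,B\,(d+1)^M$ up to the $S$-factors) yields the condition $2d\ln(1/R)-M\ln(d+1)-\ln B>\ln S$, which agrees with the stated leading term $N_0M\ln(1/R)$ of \eqref{cond for n} only when $M=2$; as written, \eqref{cond for n} does not follow from the bounds you constructed. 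Second, the displaced-conjugate cost is underestimated: a degree-$d$ polynomial bounded by $c$ on $[a_{\sigma_i},b_{\sigma_i}]$ can grow like $c\,\bigl(2er_i/(b_{\sigma_i}-a_{\sigma_i})\bigr)^{d}$ on the hull of that interval with $[s_i,t_i]$ --- exponential in $d$, not ``a factor measured by $r_i$'' to the first power. The reason $S$ enters \eqref{cond for n} only once is an amortization step you are missing: the exponential exceptional growth, of size roughly $S^{N_0}$, must be absorbed against $\bigl(\prod_\sigma c_\sigma\bigr)^{N}$ with $N\ge N_0+1$, so the genuine requirement is $\bigl(\prod_\sigma c_\sigma\bigr)\cdot S\le 1$ together with $\prod_\sigma c_\sigma<1$, rather than a one-time multiplicative loss of $S$. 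Without the non-monic pigeonhole (which produces $B$ and the $M\ln(N_0+1)$ term honestly) and this amortization, your argument does not reach the stated $N_0$.
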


We note that the proof of Theorems \ref{th121} and \ref{th122}
uses a variant of Fekete's Theorem (1923) about existence of
non-zero integer polynomials of bounded degree which differ only
slightly from zero on appropriate intervals. See \cite[Theorem
1.1.1]{Nik2}.

Below we will apply these results in two cases which are very
similar to used in \cite[Sec. 5.5]{Nik6} and \cite{Nik2}.

\medskip

{\bf Case 1.} For a natural $l\ge 3$ we denote
$\bff_l=\bq(\cos{(2\pi/l)})$. We consider a totally real algebraic
number field $\bk$ where $\bff_l\subset \bk=\bq(\alpha)$, and the
algebraic integer $\alpha$ satisfies
\begin{equation}
0<\sigma(\alpha)<a\sigma(\sin^2{(\pi/l)})
\label{case1.cond1sigma}
\end{equation}
for all $\sigma:\bk \to \br$ such that $\sigma\not=\sigma^{(+)}$, and
\begin{equation}
b_1<\sigma^{(+)}(\alpha)<b_2
\label{case1.cond1sigmapl}
\end{equation}
where $\sigma^{(+)}:\bk\to \br$ is the identity.  We assume that
$0<a<4$ and $b_1<b_2$. We denote $b=\max\{|b_1|, |b_2|\}$, and we
assume that $a\le b$. We want to estimate $[\bk:\bff_l]=N_0$ and
$N=[\bk:\bq]=N_0[\bff_l:\bq]$ from above.

For $l\ge 3$, we have $[\bff_l:\bq]=\varphi(l)/2$ where $\varphi(l)$ is 
the Euler function, and
$N_{\bff_l/\bq}(\sin^2{(\pi/l)})=\gamma(l)/4^{\varphi(l)/2}$ where
\begin{equation}
N_{\bff_l/\bq }(4\sin^2{(\pi/l)})=\gamma(l)=\left\{
\begin{array}{cl}
p &\ {\rm if}\  l=p^t>2\ {\rm where }\ p\ {\rm is\ prime,} \\
1 &\ {\rm otherwise.}
\end{array}\
\label{normsin1}
 \right .
\end{equation}
We have
$$
\frac{ba^N|N_{\bk/\bq}(\sin^2{(\pi/l)})|}{a\sin^2{(\pi/l)}}>
|N_{\bk/\bq}(\alpha)|\ge 1
$$
and
$$
\frac{b(a/4)^N\gamma(l)^{2N/\varphi(l)}}{a\sin^2{(\pi/l)}}>1.
$$
Equivalently, we have
\begin{equation}
N\left(\ln{\frac{2}{\sqrt{a}}-\frac{\ln{\gamma(l)}}{\varphi(l)}}\right)
<\ln{\sqrt{\frac{b}{a}}}-\ln{\sin{\frac{\pi}{l}}}\ \ \text{and}\ \
(\varphi(l)/2)|N.
\label{case1.1}
\end{equation}
Since $\gamma (l)\le l$, $\varphi(l)\ge Cl/\ln(\ln{l})$ for $l\ge
6$ where $C=\varphi(6)\ln{(\ln{6})}/6\ge 0.194399$,
$\sin{(\pi/l)}\le \pi/l$ for $l\ge 3$, there exists only finite
number of $l\ge 3$ such that \eqref{case1.1} has solutions $N\in \bn$.

More exactly, there exists only finite number of {\it exceptional
$l\ge 3$} such that
\begin{equation}
\ln{\frac{2}{\sqrt{a}}-\frac{\ln{\gamma(l)}}{\varphi(l)}}\le 0.
\label{case1.2}
\end{equation}
All non-exceptional $l$ satisfy the inequality
\begin{equation}
(\varphi(l)/2)\left(\ln{\frac{2}{\sqrt{a}}-
\frac{\ln{\gamma(l)}}{\varphi(l)}}\right)
<\ln{\sqrt{\frac{b}{a}}}-\ln{\sin{\frac{\pi}{l}}}.
\label{case1.3}
\end{equation}
Remark that exceptional $l$ also satisfy this inequality.

If $\gamma(l)=1$, \eqref{case1.3} implies that $l$ satisfies the
inequality
\begin{equation}
 {(C/2)\ln{(2/\sqrt{a})}}\,l<{\left(\ln{l}+\ln{(\sqrt{(b/a)}/\pi)}\right)
\ln{\ln{l}}}\,.
\label{case1.4}
\end{equation}
It follows that
\begin{equation}
l<L_0 \label{case1.5}
\end{equation}
where $L_0>3$ satisfies
\begin{equation}
{(C/2)\ln{(2/\sqrt{a})}}\,L_0\ge
{\left(\ln{L_0}+\ln{(\sqrt{(b/a)}/\pi)}\right) \ln{\ln{L_0}}}\,.
\label{case1.6}
\end{equation}
If $l=p^t$ where $p$ is prime, \eqref{case1.3} implies that $l$
satisfies the inequality
\begin{equation}
(C/2)\Delta(a)\,l<{\left(\ln{l}+\ln{(\sqrt{(b/a)}/\pi)}\right)
\ln{\ln{l}}}\,
\label{case1.7}
\end{equation}
where
\begin{equation}
\Delta(a)=\min_{l=p^t\ge L_0}
\left\{\ln{\frac{2}{\sqrt{a}}-\frac{\ln{\gamma(l)}}{\varphi(l)}>0}\right\}.
\label{case1.8}
\end{equation}
It follows that
\begin{equation} l<L_1
\end{equation}
where $L_1\ge L_0$ is a solution of the inequality
\begin{equation}
(C/2)\Delta(a)\,L_1\ge
{\left(\ln{L_1}+\ln{(\sqrt{(b/a)}/\pi)}\right)
\ln{\ln{L_1}}}\,.
\label{case1.9}
\end{equation}
Thus, to find all non-exceptional $l$ satisfying \eqref{case1.3},
we should check \eqref{case1.3} for all $l$ such that $3\le l<
L_1$, moreover, if $L_0\le l<L_1$, we can assume that $l=p^t$.
Their number is finite, and all of them can be effectively found.

For non-exceptional $l$ satisfying \eqref{case1.3}, we obtain
bounds
\begin{equation}
N_0=[\bk:\bff_l]\le \left[
\frac{\ln{\sqrt{b/a}}-\ln{\sin{(\pi/l)}}}
{(\varphi(l)/2)\left(\ln{(2/\sqrt{a})}-(\ln{\gamma(l)})/\varphi(l)\right)}
\right]
\label{case1.10}
\end{equation}
and
\begin{equation}
N=[\bk:\bq]\le \left[ \frac{\ln{\sqrt{b/a}}-\ln{\sin{(\pi/l)}}}
{(\varphi(l)/2)\left(\ln{(2/\sqrt{a})}-(\ln{\gamma(l)})/\varphi(l)\right)}
\right]\cdot(\varphi(l)/2)\,.
\label{case1.11}
\end{equation}
This using of the norm, we call the {\it Method B} (like in
\cite[Sec. 5.5]{Nik6}).

On the other hand, for fixed $l$, we obtain a bound for $N_0$
using Theorems \ref{th121}, \ref{th122} applied to $\bff=\bff_l$ and 
$\alpha$. We can take
\begin{equation}
R=\sqrt{|N_{\bff_l/\bq}(\sin^2{(\pi/l)})|(a/4)^{\varphi(l)/2}}=
\left(\frac{\gamma(l)^{1/\varphi(l)}a^{1/2}}{4}\right)^{\varphi(l)/2},
\label{case1.12}
\end{equation}
where
\begin{equation}
R<1\ \text{ if and only if }\
\ln{\frac{4}{\sqrt{a}}}-\frac{\ln{\gamma(l)}}{\varphi(l)}>0\,,
\label{case1.12a}
\end{equation}
\begin{equation}
M=[\bff_l:\bq]=\frac{\varphi(l)}{2},\ \
B=2\sqrt{|\text{discr}\,\bff_l|}
\label{case1.13}
\end{equation}
where the discriminant $|\text{discr}\,\bff_l|$ is given in
\eqref{discrFl}, and
\begin{equation}
S=\frac{2e\max\{a,b_2,a-b_1\}}{a\sin^2{(\pi/l)}}.
\label{case1.14}
\end{equation}
Then $[\bk:\bff_l]\le n_0$ and $[\bk:\bq]\le n_0\varphi(l)/2$
where $n_0$ is the least natural solution of the inequality
\eqref{cond for n}
\begin{equation} n_0M\ln{(1/R)} -
M\ln{(n_0+1)}-\ln{B}\ge \ln{S}.
\label{case1.15}
\end{equation}
In particular, this gives a bound for $[\bk:\bq]$ for exceptional
$l$ satisfying \eqref{case1.12a} and improves the bound
\eqref{case1.10} for $N_0$ when it is poor, which also improves
the bound for $[\bk:\bq]$. This using of Theorems \ref{th121},
\ref{th122}, we call the {\it Method A} (like in \cite[Sec.
5.5]{Nik6}).

We shall apply these Methods A and B to $\Gamma_2^{(6)}(14)$ in Sect.
\ref{subsubsec:Gamma6.2}.

\medskip


{\bf Case 2.} For natural $k\ge s\ge 3$, we denote
$\bff_{k,s}=\bq(\cos{(2\pi/k)},\,\cos{(2\pi/s)})$. We consider a
totally real algebraic number field $\bk$ where $\bff_{k,s}\subset
\bk=\bq(\alpha)$, and the algebraic integer $\alpha$ satisfies
\begin{equation}
0<\sigma(\alpha)<a\sigma(\sin^2{(\pi/k)}\sin^2{(\pi/s)})
\label{case2.cond1sigma}
\end{equation}
for all $\sigma:\bk \to \br$ such that $\sigma\not=\sigma^{(+)}$, and
\begin{equation}
b_1<\sigma^{(+)}(\alpha)<b_2
\label{case2.cond1sigmapl}
\end{equation}
where $\sigma^{(+)}:\bk\to \br$ is the identity.  We assume that
$0<a<16$ and $b_1<b_2$. We denote $b=\max\{|b_1|, |b_2|\}$, and we
assume that $a\le b$. We want to estimate $[\bk:\bff_{k,s}]=N_0$
and $N=[\bk:\bq]=N_0[\bff_{k,s}:\bq]$ for non-exceptional $k$ and
$s$ where {\it $l\ge 3$ is called exceptional} if
\begin{equation}
\ln{\frac{4}{\sqrt{a}}}-\frac{\ln{\gamma(l)}}{\varphi(l)}\le 0.
\label{case2.excepl}
\end{equation}
Equivalently, we have $4/\sqrt{a}\le \gamma(l)^{1/\varphi(l)}$.
We also assume that $k\ge s\ge s_0\ge 3$ where $s_0\ge 3$ is fixed.

We have $[\bff_{k,s}:\bq]=\varphi([k,s])/2\rho(k,s)$ where
$\rho(k,s)=1$ or $2$ is given in \eqref{defrho}, and
$N_{\bff_l/\bq}(\sin^2{(\pi/l)})=\gamma(l)/4^{\varphi(l)/2}$ where
$\gamma(l)$ is given in \eqref{normsin1}. We have
$$
\frac{ba^N|N_{\bk/\bq}(\sin^2{(\pi/k)})|
|N_{\bk/\bq}(\sin^2{(\pi/s)})| }
{a\sin^2{(\pi/k)}\sin^2{(\pi/s)}}> |N_{\bk/\bq}(\alpha)|\ge 1
$$
and
$$
\frac{b(a/16)^N\gamma(k)^{2N/\varphi(k)}\gamma(s)^{2N/\varphi(s)}}
{a\sin^2{(\pi/k)}\sin^2{(\pi/s)}}>1.
$$
Equivalently, we obtain
\begin{equation}
N\left(\ln{\frac{4}{\sqrt{a}}-\frac{\ln{\gamma(k)}}{\varphi(k)}-
\frac{\ln{\gamma(s)}}{\varphi(s)} }\right)
<\ln{\sqrt{\frac{b}{a}}}-\ln{\sin{\frac{\pi}{k}}}-
\ln{\sin{\frac{\pi}{s}}}\ \text{and}\
\frac{\varphi([k,s])}{2\rho(k,s)}\left|\right. N.
\label{case2.1}
\end{equation}
Since $\gamma (l)\le l$, $\varphi(l)\ge Cl/\ln(\ln{l})$ for $l\ge
6$ where $C=\varphi(6)\ln{(\ln{6})}/6$, \linebreak 
$\sin{(\pi/l)}\le \pi/l$
for $l\ge 3$, there exists only finite number of pairs $(k,s)$ such
that \eqref{case2.1} has solutions $N\in \bn$
where both $k$ and $s$ are non-exceptional.

More exactly, there exists only finite number of {\it exceptional
pairs $(k,s)$} where a pair $(k,s)$ (consisting of non-exceptional
$k$ and $s$) is called exceptional  if
\begin{equation}
\ln{\frac{4}{\sqrt{a}}}-\frac{\ln{\gamma(k)}}{\varphi(k)}-
\frac{\ln{\gamma(s)}}{\varphi(s)}\le 0.
\label{case2.2}
\end{equation}
All non-exceptional pairs $(k,s)$ satisfying \eqref{case2.1}
satisfy the inequality
\begin{equation}
\frac{\varphi([k,s])}{2\rho(k,s)}\cdot \left(
\ln{\frac{4}{\sqrt{a}}}-\frac{\ln{\gamma(k)}}{\varphi(k)}-
\frac{\ln{\gamma(s)}}{\varphi(s)}\right) <
\ln{\sqrt{\frac{b}{a}}}-\ln{\sin{\frac{\pi}{k}}}-
\ln{\sin{\frac{\pi}{s}}}\ .
\label{case2.3}
\end{equation}
Remark that exceptional pairs $(k,s)$ also satisfy this inequality.

If $\gamma(k)=\gamma(s)=1$ and $k\ge s$, \eqref{case2.3} implies
\begin{equation}
{(C/2)\ln{(4/\sqrt{a})}}
k<{\left(2\ln{k}+\ln{(\sqrt{(b/a)}/\pi^2)}\right)\ln{\ln{k}}}.
\label{case2.4}
\end{equation}
It follows that
\begin{equation}
s_0\le s\le k< K_0
\label{case2.5}
\end{equation}
where $K_0>3$ satisfies
\begin{equation}
{(C/2)\ln{(4/\sqrt{a})}}
K_0\ge {\left(2\ln{K_0}+\ln{(\sqrt{(b/a)}/\pi^2)}\right)\ln{\ln{K_0}}}.
\label{case2.6}
\end{equation}
If one of $\gamma(k)$, $\gamma(s)$ is not equal to $1$, then
\eqref{case2.3} implies for non-exceptional pairs $(k,s)$ that
\begin{equation}
{(C/2)\Delta_1(a)}
k<{\left(2\ln{k}+\ln{(\sqrt{(b/a)}/\pi^2)}\right)\ln{\ln{k}}}
\label{case2.7}
\end{equation}
where
\begin{equation}
\Delta_1(a)=\min_{k\ge s\ge s_0, k\ge K_0}\left\{\ln{\frac{4}{\sqrt{a}}}-
\frac{\ln{\gamma(s)}}{\varphi(s)}-
\frac{\ln{\gamma(k)}}{\varphi(k)}\,>\,0\right\}\,.
\label{case2.8}
\end{equation}
It follows that
\begin{equation}
s_0\le s\le k<K_1
\label{case2.9}
\end{equation}
where $K_1\ge K_0$ is a solution of the inequality
\begin{equation}
{(C/2)\Delta_1(a)}
K_1\ge {\left(2\ln{K_1}+\ln{(\sqrt{(b/a)}/\pi^2)}\right)\ln{\ln{K_1}}}.
\label{case2.10}
\end{equation}
Thus, to find all non-exceptional pairs $(k,s)$ satisfying \eqref{case2.3},
we should check \eqref{case2.3} for all $s_0\le s\le k<K_1$; moreover, if
$K_0\le k\le K_1$, we can assume that one of $k$ and $s$ is equal to $p^t$
where $p$ is prime. The number of such pairs is finite, and all of them 
can be effectively found.

For such non-exceptional pairs $(k,s)$ satisfying \eqref{case2.3},
we obtain bounds
\begin{equation}
N_0=[\bk:\bff_{k,s}]\le \left[\frac
{\ln{\sqrt{\frac{b}{a}}}-\ln{\sin{\frac{\pi}{k}}}-
\ln{\sin{\frac{\pi}{s}}}} {\frac{\varphi([k,s])}{2\rho(k,s)}\cdot
\left( \ln{\frac{4}{\sqrt{a}}}-\frac{\ln{\gamma(k)}}{\varphi(k)}-
\frac{\ln{\gamma(s)}}{\varphi(s)}\right)}\right]
\label{case2.11}
\end{equation}
and
\begin{equation}
N=[\bk:\bq]\le \left[\frac
{\ln{\sqrt{\frac{b}{a}}}-\ln{\sin{\frac{\pi}{k}}}-
\ln{\sin{\frac{\pi}{s}}}} {\frac{\varphi([k,s])}{2\rho(k,s)}\cdot
\left( \ln{\frac{4}{\sqrt{a}}}-\frac{\ln{\gamma(k)}}{\varphi(k)}-
\frac{\ln{\gamma(s)}}{\varphi(s)}\right)}\right]\cdot
\frac{\varphi([k,s])}{2\rho(k,s)}\ .
\label{case2.12}
\end{equation}
This using of the norm, we call the {\it Method B} (like in
\cite[Sec. 5.5]{Nik6}).

On the other hand, for a fixed pair $(k,s)$, we can obtain a bound
for $N_0$ using Theorems \ref{th121}, \ref{th122} 
applied to $\bff=\bff_{k,s}$ and $\alpha$. We can 
take 
\begin{equation}
R=\sqrt{|N_{\bff_{k,s}/\bq}(\sin^2{\frac{\pi}{k}}
\sin^2{\frac{\pi}{s}})|
\left(\frac{a}{4}\right)^{\frac{\varphi([k,s])}{2\rho(k,s)}}}=
\left(\frac{\gamma(k)^{\frac{1}{\varphi(k)}}
\gamma(s)^{\frac{1}{\varphi(s)}}
a^{\frac{1}{2}}}{8}\right)^{\frac{\varphi([k,s])}{2\rho(k,s)}}
\label{case2.13}
\end{equation}
where
\begin{equation}
R<1\ \text{if and only if}\
\ln{\frac{8}{\sqrt{a}}}-\frac{\ln{\gamma(k)}}{\varphi(k)}-
\frac{\ln{\gamma(s)}}{\varphi(s)}>0,
\label{case2.14}
\end{equation}
\begin{equation}
M=[\bff_{k,s}:\bq]=\frac{\varphi([k,s])}{2\rho(k,s)},\ \
B=2\sqrt{|\text{discr}\,\bff_{k,s}|}
\label{case2.15}
\end{equation}
where the discriminant $|\text{discr}\,\bff_{k,s}|$ is given in
\eqref{discrFks1} and \eqref{discrFks2}, and
\begin{equation}
S=\frac{2e\max\{a,b_2,a-b_1\}}{a\sin^2{(\pi/s)}\sin^2{(\pi/k)}}.
\label{case2.16}
\end{equation}
For all pairs $(k,s)$ satisfying \eqref{case2.14}, we obtain the
bounds $[\bk:\bff_{k,s}]\le n_0$ and $[\bk:\bq]\le
n_0\varphi([k,s])/(2\rho(k,s))$ where $n_0$ is the least natural
solution of the inequality \eqref{cond for n},
$$
n_0M\ln{(1/R)} - M\ln{(n_0+1)}-\ln{B}\ge \ln{S}.
$$
For $a<16$ and $k,s\ge 3$, all
pairs $(k,s)$, except finite number, satisfy \eqref{case2.14},
and we can apply this method to all these pairs.
In particular, this gives a bound for $[\bk:\bq]$ for all exceptional pairs
$(k,s)$ satisfying \eqref{case2.14}, and it improves the bound
\eqref{case2.11} for $N_0$ when it is poor,
which also improves the bound for $[\bk:\bq]$.
This using of Theorems \ref{th121}, \ref{th122}, we call the {\it
Method A} (like in \cite[Sec. 5.5]{Nik6}).

We apply these Methods A and B to $\Gamma_1^{(6)}(14)$ in Sect.
\ref{subsec:pentgamma61} and $\Gamma_3^{(6)}(14)$,
$\Gamma_1^{(7)}(14)$, $\Gamma_2^{(7)}(14)$ in Sect.
\ref{subsec:pentgraphs}.

\subsection{V-arithmetic pentagon graphs $\Gamma^{(6)}_1(14)$
and their fields.}
\label{subsec:pentgamma61}

Here we consider
V-arithmetic 3-dimensional graphs $\Gamma^{(6)}_1(14)$ and their 
fields. See  Definition \ref{defpentgamma61fields} and Figure 
\ref{pentgraph61}.  

First, let us consider a pentagon $\Delta$ on hyperbolic plane which has all
angles right ($=\pi/2$) except one angle which is equal to $\pi/k$,
$k\ge 2$. We denote $c=2\cos(\pi/k)$ where $-2< c< 2$.
When $c=0$, we obtain a pentagon with right angles.

Let
$$
P(\Delta)=\{\beta_1,\beta_2,\beta_3,\beta_4,\beta_5\}
$$
correspond to five consecutive sides of $\Delta$ and $\beta_1,\beta_2$ are
perpendicular to the vertex of $\Delta$ with the angle $\pi/k$. Thus,
$\beta_1\cdot\beta_2=c$ and $\beta_i\cdot \beta_{i+1}=0$ for $2\le i\le 5$.
The Gram graph of $\Delta$ is given in Figure \ref{pentagon}
where we denote $b_{ij}=\beta_i\cdot \beta_j$ with $b_{ij}>2$.

\begin{figure}
\begin{center}
\includegraphics[width=11cm]{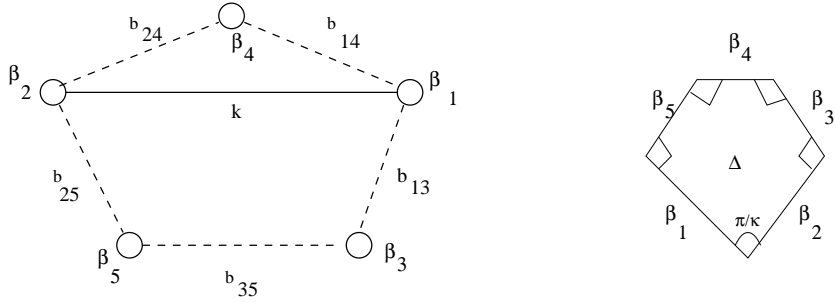}
\end{center}
\caption{The graph of a pentagon with one non-right angle}
\label{pentagon}
\end{figure}

The vectors $\beta_1,\beta_2,\beta_4$ generate the form $\Phi$ which defines
the hyperbolic plane. The determinant of their Gram matrix is equal to
\begin{equation}
d_{124}=-8+2cb_{14}b_{24}+2b_{14}^2+2b_{24}^2+2c^2.
\label{d124}
\end{equation}
It must be positive for the geometric embedding $\sigma^{(+)}$, and
it must be negative for non-geometric embeddings $\sigma\not=\sigma^{(+)}$.
Thus,
\begin{equation}
0<b_{14}^2+b_{24}^2+cb_{14}b_{24}<4-c^2
\label{d124notgeom}
\end{equation}
for $\sigma\not=\sigma^{(+)}$. Here the first inequality follows from
$-2<c<2$ for any embedding since $c=2\cos(\pi/k)$ where $k\ge 2$ is
an integer.

Applying this to the pentagon $\Delta$ corresponding to
$\Gamma^{(6)}_1$, we obtain $\beta_i=\delta_i$ for $i\not=4$, and
$\beta_4=\widetilde{\delta}_4=(\delta_4+\cos{(\pi/m)}e)/\sin{(\pi/m)}$.
Then $b_{14}=a_{14}/\sin{(\pi/m)}$, $b_{24}=a_{24}/\sin{(\pi/m)}$
and for
\begin{equation}
\alpha=a_{14}^2+a_{24}^2+2\cos{(\pi/k)}a_{14}a_{24}
\label{alphagam16}
\end{equation}
we obtain
\begin{equation}
0<\sigma(\alpha)< 4\sigma(\sin^2{\frac{\pi}{k}}
\sin^2{\frac{\pi}{m}}) \label{d124anotgeom}
\end{equation}
for $\sigma\not=\sigma^{(+)}$, and
\begin{equation}
12<\sigma^{(+)}(\alpha)<(2\cdot 14)^2=28^2. \label{d124ageom}
\end{equation}
since $2<\sigma^{(+)}(a_{ij})<14$ and
$\sigma^{(+)}(2\cos{(\pi/k)})\ge 1$ for $k\ge 3$.

Thus, $\alpha$ is a totally positive algebraic integer. By
\eqref{d124anotgeom} and \eqref{d124ageom}, the ground field
$\bk=\bq(\alpha)$ is generated by $\alpha$, and
$\bff_{k,m}=\bq(\cos(2\pi/k),\cos(2\pi/m))\subset \bk$. We get
exactly the same situation which we had considered in \cite[Sec.
5.5]{Nik6}. Only for the inequality \eqref{d124ageom} we had
$<14^2$ in \cite{Nik6} instead of $28^2$ here. Then exactly the
same considerations as in \cite[Sec. 5.5]{Nik6} show that
$[\bk:\bq]\le 56$. The worst case is achieved for
$\{k,m\}=\{3,113\}$.

\medskip

More exactly, we apply the methods A and B of Case 2 in Sec.
\ref{subsec:genresults} to $a=4$, $b_1=12$, $b_2=28^2$ (then
$b=28^2$) and $k:=\max\{k,m\}$, $s:=\min\{k,m\}$ where $k\ge s\ge
3$ and $s_0=3$.

At first, we apply the Method B. Since $\ln{2}>\ln{(3)}/2$, all
$k$ and $s$ are non-exceptional, see \eqref{case2.excepl}. All
exceptional pairs $(k,s)$ that is when \eqref{case2.2} which is
\begin{equation} \ln{2}-\frac{\ln{\gamma(k)}}{\varphi(k)}-
\frac{\ln{\gamma(s)}}{\varphi(s)}\le 0
\label{Gamma1.6.1}
\end{equation}
satisfies,
 are $(k,s=3)$ where
$k=3,4,5,7,8,9,11,13,17,19$; $(k,s=4)$ where $k=4,5$; $(k,s=5)$
where $k=5,7$.

We can take $K_0=306$ in \eqref{case2.6}. Then
$\Delta_1(4)=\ln(2)-\ln(3)/2-\ln(307)/306\ge 0.1251$, and
$K_1=2760$ can be taken in \eqref{case2.10}.
Checking \eqref{case2.3} for $3\le s\le
k<2760$, we obtain that $3\le s\le 90$ and $3\le s\le k\le 420$.
Moreover, $k\le 90$ for $11\le s\le 90$. For all these pairs
$(k,s)$ satisfying \eqref{case2.3} which is
\begin{equation}
\frac{\varphi([k,s])}{2\rho(k,s)}\cdot \left(
\ln{2}-\frac{\ln{\gamma(k)}}{\varphi(k)}-
\frac{\ln{\gamma(s)}}{\varphi(s)}\right) <
\ln{14}-\ln{\sin{\frac{\pi}{k}}}- \ln{\sin{\frac{\pi}{s}}}\ ,
\label{Gamma1.6.2}
\end{equation}
we obtain
\begin{equation}
[\bff_{k,s}:\bq]=\frac{\varphi([k,s])}{2\rho(k,s)}\le 56
\label{Gamma1.6.3}
\end{equation}
where $56$ is achieved for $(k,s)=(113,3)$. Moreover, for all
these non-exceptional pairs $(k,s)$ we obtain the bound
\eqref{case2.11} which is
\begin{equation}
N_0=[\bk:\bff_{k,s}]\le \left[\frac
{\ln{14}-\ln{\sin{\frac{\pi}{k}}}- \ln{\sin{\frac{\pi}{s}}}}
{\frac{\varphi([k,s])}{2\rho(k,s)}\cdot \left(
\ln{2}-\frac{\ln{\gamma(k)}}{\varphi(k)}-
\frac{\ln{\gamma(s)}}{\varphi(s)}\right)}\right],
\label{Gamma1.6.4}
\end{equation}
and finally we obtain the bound \eqref{case2.12} which is
\begin{equation}
N=[\bk:\bq]\le \left[\frac {\ln{14}-\ln{\sin{\frac{\pi}{k}}}-
\ln{\sin{\frac{\pi}{s}}}} {\frac{\varphi([k,s])}{2\rho(k,s)}\cdot
\left( \ln{2}-\frac{\ln{\gamma(k)}}{\varphi(k)}-
\frac{\ln{\gamma(s)}}{\varphi(s)}\right)}\right]\cdot
\frac{\varphi([k,s])}{2\rho(k,s)}\ .
\label{Gamma1.6.5}
\end{equation}
If either a pair $(k,s)$ is exceptional, or the right hand side of
\eqref{Gamma1.6.5} is more than $56$ (these are possible
only for pairs $(k,s)$ with $3\le s\le 7$ and $s\le k\le 420$),
we also apply to
the pair $(k,s)$ the method A of the Case 2 to improve the poor bound
\eqref{Gamma1.6.4} of $N_0=[\bk:\bff_{k,s}]$ for non-exceptional $(k,s)$.
We can apply this method to any pair $(k,s)$ since \eqref{case2.14} is
valid for all $k\ge s\ge 3$ if $a=4$. Finally we obtain that
$[\bk:\bq]\le 56$.

\subsection{V-arithmetic pentagon graphs $\Gamma^{(6)}_2(14)$,
$\Gamma^{(6)}_3(14)$,\\
$\Gamma^{(7)}_1(14)$, $\Gamma^{(7)}_2(14)$, and their ground fields}
\label{subsec:pentgraphs}

\begin{figure}
\begin{center}
\includegraphics[width=11cm]{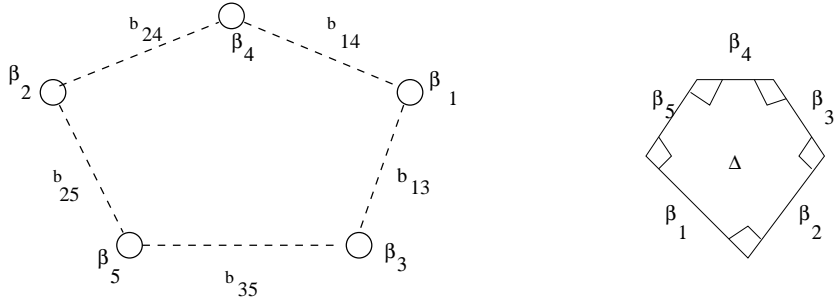}
\end{center}
\caption{The graph of pentagon with right angles}
\label{pentagon0}
\end{figure}

Here we consider V-arithmetic 3 and 4-dimensional graphs
$\Gamma^{(6)}_2(14)$, $\Gamma^{(6)}_3(14)$, $\Gamma^{(7)}_1(14)$,
$\Gamma^{(7)}_2(14)$ and their fields. See Definition 
\ref{defpentagonfields} and Figure \ref{pentgraphs}. 

They are related to pentagons with right angles. This can be
considered as a specialisation of the previous case when $c=0$.
See Figure \ref{pentagon0}.

The Gram matrix of any 4 elements $\beta_i$ must have zero determinant.
Considering this for all four elements $\beta_i$, we obtain equations
\begin{equation}
\begin{array}{l}
4b_{14}^2=(4-b_{13}^2)(4-b_{24}^2),\\
4b_{24}^2=(4-b_{14}^2)(4-b_{25}^2),\\
4b_{25}^2=(4-b_{24}^2)(4-b_{35}^2),\\
4b_{35}^2=(4-b_{25}^2)(4-b_{13}^2),\\
4b_{13}^2=(4-b_{35}^2)(4-b_{14}^2).
\end{array}
\label{equationspent}
\end{equation}
It follows that
\begin{equation}
\gamma=b_{13}b_{14}b_{24}b_{25}b_{35}=
-(4-b_{13}^2)(4-b_{14}^2)(4-b_{24}^2)(4-b_{25}^2)(4-b_{35}^2)/2^5
\label{defgamma}
\end{equation}
since for the geometric embedding the sign must be positive.
One can also see the sign from the determinant of the Gram matrix of
$\beta_1,\dots,\beta_5$ which is equal to zero:
$$
8b_{13}^2+8b_{35}^2+8b_{25}^2+8b_{24}^2+8b_{14}^2+
2b_{13}b_{14}b_{24}b_{25}b_{35}-
$$
$$
-2b_{13}^2b_{24}^2-2b_{13}^2b_{25}^2-
2b_{14}^2b_{25}^2-2b_{14}^2b_{35}^2-2b_{24}^2b_{35}^2-32 = 0.
$$
Thus, \eqref{equationspent} and \eqref{defgamma} give equations of
a hyperbolic pentagon with right angles.

For the geometric embedding $\sigma^{(+)}$ the expression $\gamma$
is positive, and for $\sigma\not=\sigma^{(+)}$ it is negative
because $0<b_{ij}^2<4$ for all $b_{ij}$. We have the following
very important statement.

\begin{lemma} The minimum of $\gamma$ in \eqref{defgamma} for
$b_{ij}$ satisfying \eqref{equationspent} and $0\le b_{ij}^2\le 4$
is achieved for equal $b_{ij}^2$. Then it is equal to
$$
-(\sqrt{5}-1)^5=-2.8854381999983\dots\ .
$$
\label{mingamma}
\end{lemma}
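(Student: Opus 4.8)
The plan is to turn the five Gram-minor equations \eqref{equationspent} into a single cyclic recurrence and exploit a periodicity miracle. First I would order the five diagonal entries cyclically as $c_1=b_{13}$, $c_2=b_{14}$, $c_3=b_{24}$, $c_4=b_{25}$, $c_5=b_{35}$ and substitute $y_i=1-c_i^2/4\in[0,1]$. A direct check of each line of \eqref{equationspent} shows that all five equations collapse to the single cyclic relation $y_{i-1}y_{i+1}+y_i=1$ for $i\in\mathbb{Z}/5\mathbb{Z}$, while \eqref{defgamma} becomes $\gamma=-32\prod_{i=1}^5 y_i$. Thus minimizing $\gamma$ is exactly maximizing $\prod y_i$ subject to these relations. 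Any solution with some $y_i\in\{0,1\}$ forces (via the relation at a neighbour) a second $y_j=0$, hence $\prod y_i=0$ and $\gamma=0$, which is not the minimum; so I may restrict to $y_i\in(0,1)$.

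Next I would solve the recurrence $y_{i+1}=(1-y_i)/y_{i-1}$. The key step is that this Lyness-type recurrence is \emph{identically} $5$-periodic: writing $y_1=a$, $y_2=b$ and iterating gives
\[
y_3=\frac{1-b}{a},\qquad y_4=\frac{a+b-1}{ab},\qquad y_5=\frac{1-a}{b},
\]
and a one-line substitution verifies $y_6=a$, $y_7=b$ as algebraic identities. Hence the whole solution set (with nonzero entries) is the two-parameter family above, matching the expected two degrees of freedom of a right-angled hyperbolic pentagon. Multiplying out, the objective becomes the clean rational function
\[
P(a,b):=\prod_{i=1}^5 y_i=\frac{(1-a)(1-b)(a+b-1)}{ab}.
\]

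Finally I would maximize $P$ over the open triangle $T=\{\,0<a<1,\ 0<b<1,\ a+b>1\,\}$, which contains the image of all admissible solutions. Since $P$ extends continuously by $0$ to $\partial T$ and is strictly positive inside, the maximum is attained at an interior critical point. From $\partial_a\log P=\partial_b\log P$ one gets
\[
(a-b)\left(\frac{1}{(1-a)(1-b)}-\frac{1}{ab}\right)=0,
\]
whose second factor vanishes only when $a+b=1$ (a boundary point with $P=0$); hence $a=b$. On the diagonal, $f(a)=(1-a)^2(2a-1)/a^2$ has its unique critical point where $a^2+a-1=0$, i.e. $a=y_\ast:=(\sqrt5-1)/2$. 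Therefore $\max\prod y_i=y_\ast^{\,5}$, attained exactly when all $y_i=y_\ast$, that is when all $b_{ij}^2$ are equal; and $\gamma_{\min}=-32\,y_\ast^{\,5}=-(\sqrt5-1)^5$, as claimed. The one genuinely nontrivial point—the main obstacle—is establishing the automatic $5$-periodicity, since it is what reduces five coupled constraints to the explicit product $P(a,b)$; after that, everything is elementary two-variable calculus plus a boundary check.
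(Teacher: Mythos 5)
Your proof is correct and follows essentially the same route as the paper: both reduce the five cyclic Gram relations to two free parameters, express the product as an explicit rational function of those two variables, and show by elementary two-variable calculus that an interior critical point forces the two parameters to be equal, yielding $-(\sqrt{5}-1)^5$. The only substantive difference is cosmetic --- you use adjacent variables (hence the triangle domain) and justify the consistency of the cyclic system via the $5$-periodicity of the Lyness recurrence, a point the paper leaves implicit by simply declaring $q_{13}$ and $q_{24}$ to be free variables.
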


\begin{proof} We denote $q_{ij}=4-a_{ij}^2$. Then equations
\eqref{equationspent} are
\begin{equation}
\begin{array}{l}
q_{14}=4-\frac{q_{13}q_{24}}{4},\\
q_{24}=4-\frac{q_{14}q_{25}}{4},\\
q_{25}=4-\frac{q_{24}q_{35}}{4},\\
q_{35}=4-\frac{q_{25}q_{13}}{4},\\
q_{13}=4-\frac{q_{35}q_{14}}{4}.
\end{array}
\label{equationspentq}
\end{equation}
We should find maximum of
$$
A=q_{13}q_{14}q_{24}q_{25}q_{35}
$$
in the region $0\le q_{ij}\le 4$ with the conditions
\eqref{equationspentq}. If one of $q_{ij}=0$ or $q_{ij}=4$, then
$A=0$. Thus, the maximum is taken when all $0<q_{ij}<4$.

We denote $x=q_{13}$ and $y=q_{24}$. Then $A=2^6F(x,y)$ where
$$
F(x,y)=\frac{x^2y^2+16xy-4x^2y-4xy^2}{16-xy},
$$
and $0<x<4$ and $0<y<4$ are free variables. We have
$$
\frac{\partial F}{\partial x}=
\frac{-x^2y^3+4x^2y^2+32y^2x-128xy-64y^2+256y}{(16-xy)^2}
$$
and
$$
\frac{\partial F}{\partial y}=
\frac{-x^3y^2+4x^2y^2+32x^2y-128xy-64x^2+256x}{(16-xy)^2}\,.
$$
From $\frac{\partial F}{\partial x}=0$, we get
$$
-x^2y^2+4x^2y+32yx-128x-64y+256=0.
$$
From  $\frac{\partial F}{\partial y}=0$, we get
$$
-x^2y^2+4xy^2+32xy-128y-64x+256=0.
$$
Taking difference of these equations, we get $(x-y)(xy-16)=0$
which gives $x=y$ and $(x^2+4x-16)(x-4)^2=0$. It follows that
$x=y=2(\sqrt{5}-1)$.

It follows the statement.
\end{proof}

By Lemma \ref{mingamma}, we have
\begin{equation}
-2.88543819\dots\ = -\gamma_0=-(\sqrt{5}-1)^5<\sigma(\gamma)<0.
\label{inegamma}
\end{equation}
where we denote
\begin{equation}
\gamma_0=(\sqrt{5}-1)^5=2.885438199983\dots\ \le 2.8854382.
\label{gamma0}
\end{equation}

For pentagons $\Delta$
of graphs $\Gamma^{(6)}_2$, $\Gamma^{(6)}_3$, $\Gamma^{(7)}_1$
and $\Gamma^{(7)}_2$, we must replace $\beta_i$ by $\delta_i$
or $\widetilde{\delta_i}$ given in the definition which gives $b_{ij}$
from $a_{ij}$. From these expressions, we get
\begin{equation}
b_{13}b_{14}b_{24}b_{25}b_{35}=\frac{a_{13}a_{14}a_{24}a_{25}a_{35}}
{\sin^2{\frac{\pi}{m}}}\ \ \ for \ \ \Gamma_2^{(6)},
\label{equ62}
\end{equation}

\begin{equation}
b_{13}b_{14}b_{24}b_{25}b_{35}=\frac
{a_{13}a_{14}a_{24}a_{25}a_{35}+
2\cos{\frac{\pi}{m_1}}\cos{\frac{\pi}{m_3}}a_{14}a_{24}a_{25}a_{35}}
{\sin^2{\frac{\pi}{m_1}}{\sin^2{\frac{\pi}{m_3}}}}\ \ \ for \ \ \Gamma_3^{(6)},
\label{equ63}
\end{equation}

\begin{equation}
b_{13}b_{14}b_{24}b_{25}b_{35}=\frac{a_{13}a_{14}a_{24}a_{25}a_{35}}
{\sin^2{\frac{\pi}{m_1}}\sin^2{\frac{\pi}{m_2}} }\ \ \ for \ \ \Gamma_1^{(7)},
\label{equ71}
\end{equation}

\begin{equation}
b_{13}b_{14}b_{24}b_{25}b_{35}=\frac
{a_{13}a_{14}a_{24}a_{25}a_{35}+
2\cos{\frac{\pi}{m_1}}\cos{\frac{\pi}{m_3}}a_{14}a_{24}a_{25}a_{35}}
{\sin^2{\frac{\pi}{m_1}}{\sin^2{\frac{\pi}{m_3}}}
{\sin^2{\frac{\pi}{m_2}}}}\ \ \ for \ \ \Gamma_2^{(7)}.
\label{equ72}
\end{equation}

We consider an algebraic integer $\alpha\in \bk$ which is equal to
\begin{equation}
\alpha=\left\{
\begin{array}{ll}
a_{13}a_{14}a_{24}a_{25}a_{35} & \mbox{for
$\Gamma_2^{(6)}$ and $\Gamma_1^{(7)}$} \\
2a_{13}a_{14}a_{24}a_{25}a_{35}+ & \\
+4\cos{(\pi/m_1)}\cos{(\pi/m_3)}a_{14}a_{24}a_{25}a_{35} &
\mbox{for $\Gamma_3^{(6)}$ and $\Gamma_2^{(7)}$}
\end{array}.
\right.
\label{alpha}
\end{equation}
By \eqref{inegamma}, for $\sigma\not=\sigma^{(+)}$, we obtain
the inequalities
\begin{equation}
0>\sigma(\alpha)>-\gamma_0\sigma(\sin^2{\frac{\pi}{m}})\ \mbox{for
$\Gamma_2^{(6)}$}, \label{inegamma26}
\end{equation}

\begin{equation}
0>\sigma(\alpha)>-2\gamma_0\sigma(\sin^2{\frac{\pi}{m_1}}
\sin^2{\frac{\pi}{m_3}})\
\mbox{for $\Gamma_3^{(6)}$}, \label{inegamma36}
\end{equation}

\begin{equation}
0>\sigma(\alpha)>-\gamma_0\sigma(\sin^2{\frac{\pi}{m_1}}
\sin^2{\frac{\pi}{m_2}})\
\mbox{for $\Gamma_1^{(7)}$}, \label{inegamma17}
\end{equation}

\begin{equation}
0>\sigma(\alpha)>-2\gamma_0\sigma(\sin^2{\frac{\pi}{m_1}}
\sin^2{\frac{\pi}{m_3}}
\sin^2{\frac{\pi}{m_2}})\ \mbox{for $\Gamma_2^{(7)}$}.
\label{inegamma27}
\end{equation}
Since we consider 14-minimal graphs, we have
\begin{equation}
2^5<\sigma^{(+)}(\alpha)<14^5\
\mbox{for $\Gamma_2^{(6)}(14)$, $\Gamma_1^{(7)}(14)$},
\label{inegamma+2617}
\end{equation}
\begin{equation}
2^6<\sigma^{(+)}(\alpha)<2\cdot 14^5+4\cdot 14^4=32\cdot 14^4\
\mbox{for $\Gamma_3^{(6)}(14)$, $\Gamma_2^{(7)}(14)$},
\label{inegamma+3627}
\end{equation}

It follows that $\bk=\bq(\alpha)$. Since
$2\gamma_0=5.7708763999663\dots\ <16$, all these cases are similar
to considered in \cite[Sec. 5.5]{Nik6} (and originally in
\cite{Nik2}). We have adapted these considerations to our case
here in Sec. \ref{subsec:genresults}, and we have applied them in
Sec. \ref{subsec:pentgamma61}. For each graph, we give details
below which will be very important for further study.

\subsubsection{V-arithmetic pentagon graphs $\Gamma^{(6)}_2(14)$.}
\label{subsubsec:Gamma6.2}
For $\Gamma^{(6)}_2(14)$, considering $(-\alpha)$ from \eqref{alpha},
we apply the methods A and B of Case 1 in Sec. \ref{subsec:genresults}
to $a=\gamma_0=(\sqrt{5}-1)^5\le 2.8854382$, $b_1=-14^5$,
$b_2=-2^5$ (then $b=14^5$), and $l:= m$.

At first, we apply the Method B. All
exceptional $l\ge 3$ that is when \eqref{case1.2} which is
\begin{equation}
\ln{\frac{2}{\sqrt{\gamma_0}}-\frac{\ln{\gamma(l)}}{\varphi(l)}}\le 0
\label{Gamma2.6.1}
\end{equation}
satisfies are $l=3,4,5,7,8,9,11,13,17,19$.

We can take $L_0=1540$ in \eqref{case1.6}. Then
$$
\Delta_1(\gamma_0)=
\ln(\frac{2}{\sqrt{\gamma_0}})-\frac{\ln{1543}}{1542}\ge 0.1585\,,
$$
and $L_1=1595$ can be taken in \eqref{case1.9}.
Checking \eqref{case1.3} for $3\le  l< 1595$,
we obtain that $3\le l\le 510$.
For all these $l$ such that \eqref{case1.3} which is
\begin{equation}
\frac{\varphi(l)}{2}\cdot \left(\ln{\frac{2}{\sqrt{\gamma_0}}-
\frac{\ln{\gamma(l)}}{\varphi(l)}}\right)
<\ln{\sqrt{\frac{14^5}{\gamma_0}}}-\ln{\sin{\frac{\pi}{l}}}
\label{Gamma2.6.2}
\end{equation}
satisfies, we obtain
\begin{equation}
[\bff_{l}:\bq]=\frac{\varphi(l)}{2}\le 75
\label{Gamma2.6.3}
\end{equation}
where $75$ is achieved for $l=151$. Moreover, for all
these non-exceptional $l$ we obtain the bound
\eqref{case1.10} which is

\begin{equation}
N_0=[\bk:\bff_l]\le \left[
\frac{\ln{\sqrt{\frac{14^5}{\gamma_0}}}-\ln{\sin{\frac{\pi}{l}}}}
{\frac{\varphi(l)}{2}\cdot \left(\ln{\frac{2}{\sqrt{\gamma_0}}-
\frac{\ln{\gamma(l)}}{\varphi(l)}}\right)          }
\right],
\label{Gamma2.6.4}
\end{equation}
and finally we obtain the bound \eqref{case1.11} which is
\begin{equation}
N=[\bk:\bq]\le 
\left[
\frac{\ln{\sqrt{\frac{14^5}{\gamma_0}}}-\ln{\sin{\frac{\pi}{l}}}}
{\frac{\varphi(l)}{2}\cdot \left(\ln{\frac{2}{\sqrt{\gamma_0}}-
\frac{\ln{\gamma(l)}}{\varphi(l)}}\right)          }
\right]\cdot 
\frac{\varphi(l)}{2}\ .
\label{Gamma2.6.5}
\end{equation}
If either $l$ is exceptional, or the right hand side of
\eqref{Gamma2.6.5} is more than $75$ (this is possible only
for $3\le l\le 83$), we also apply to $l$
the method A of the Case 1 to improve the poor bound
\eqref{Gamma2.6.4} for $N_0=[\bk:\bff_{l}]$ for non-exceptional $l$.
We can apply this method to any $l\ge 3$ since \eqref{case1.12a} is
valid for all $l\ge 3$ if $a=\gamma_0$. Finally we obtain that
$[\bk:\bq]\le 75$.

\subsubsection{V-arithmetic pentagon graphs $\Gamma^{(6)}_3(14)$.}
\label{subsubsec:Gamma6.3}
In this case, considering $(-\alpha)$ from \eqref{alpha},
we apply the methods A and B of Case 2 in Sec.
\ref{subsec:genresults} to $a=2\gamma_0\le 5.7708764$,
$b_1=-32\cdot 14^4$, $b_2=-2^6$ (then $b=32\cdot 14^4$) and
$k:=\max\{m_1,m_3\}$, $s:=\min\{m_1,m_3\}$ where $k\ge s\ge 3$ and
$s_0=3$.

At first, we apply the Method B. Exceptional $l\ge 3$ satisfy
\eqref{case2.excepl} which is
\begin{equation}
\ln{\frac{4}{\sqrt{2\gamma_0}}}-\frac{\ln{\gamma(l)}}{\varphi(l)}\le 0.
\label{Gamma3.6.0}
\end{equation}
It follows that $l=3$ is the only exceptional.

All exceptional pairs $(k,s)$ where $k\ge s \ge 4$
that is when \eqref{case2.2} which is
\begin{equation}
\ln{\frac{4}{\sqrt{2\gamma_0}}}-\frac{\ln{\gamma(k)}}{\varphi(k)}-
\frac{\ln{\gamma(s)}}{\varphi(s)}\le 0.
\label{Gamma3.6.1}
\end{equation}
satisfies, are $(k,s=4)$ where
$k=4,5,7,8,9,11,13,17,19$; $(k,s=5)$ where $k=5,7,8,9,11,13,17,19,23,
29,31$; $(k,s=7)$ where $k=7,11,13$.

We can take $K_0=630$ in \eqref{case2.6}. Then
$$
\Delta_1(2\gamma_0)=\ln{\frac{4}{\sqrt{2\gamma_0}}}-
\frac{\ln{5}}{4}-\frac{\ln{631}}{630} \ge 0.097289,
$$
and $K_1=4684$ can be taken in \eqref{case2.10}. Checking
\eqref{case2.3} for $4\le s\le k<4684$, we obtain that $4\le s\le
210$ and $4\le s\le k\le 870$. Moreover, $k\le 210$ for $14\le
s\le 210$. For all these pairs $(k,s)$ satisfying \eqref{case2.3}
which is
\begin{equation}
\frac{\varphi([k,s])}{2\rho(k,s)}\cdot \left(
\ln{\frac{4}{\sqrt{2\gamma_0}}}-\frac{\ln{\gamma(k)}}{\varphi(k)}-
\frac{\ln{\gamma(s)}}{\varphi(s)}\right) < \ln{\frac{4\cdot
14^2}{\sqrt{\gamma_0}}}-\ln{\sin{\frac{\pi}{k}}}-
\ln{\sin{\frac{\pi}{s}}}\, ,  \label{Gamma3.6.2}
\end{equation}
we obtain
\begin{equation}
[\bff_{k,s}:\bq]=\frac{\varphi([k,s])}{2\rho(k,s)}\le 138
\label{Gamma3.6.3}
\end{equation}
where $138$ is achieved for $(k,s)=(139,5)$. Moreover, for all
these \linebreak non-exceptional pairs $(k,s)$ we obtain the bound
\eqref{case2.11} which is
\begin{equation}
N_0=[\bk:\bff_{k,s}]\le \left[\frac {\ln{\frac{4\cdot
14^2}{\sqrt{\gamma_0}}}-\ln{\sin{\frac{\pi}{k}}}-
\ln{\sin{\frac{\pi}{s}}}} {\frac{\varphi([k,s])}{2\rho(k,s)}\cdot
\left(
\ln{\frac{4}{\sqrt{2\gamma_0}}}-\frac{\ln{\gamma(k)}}{\varphi(k)}-
\frac{\ln{\gamma(s)}}{\varphi(s)}\right)}\right],
\label{Gamma3.6.4}
\end{equation}
and finally we obtain the bound \eqref{case2.12} which is
\begin{equation}
N=[\bk:\bq]\le \left[\frac {\ln{\frac{4\cdot
14^2}{\sqrt{\gamma_0}}}-\ln{\sin{\frac{\pi}{k}}}-
\ln{\sin{\frac{\pi}{s}}}} {\frac{\varphi([k,s])}{2\rho(k,s)}\cdot
\left(
\ln{\frac{4}{\sqrt{2\gamma_0}}}-\frac{\ln{\gamma(k)}}{\varphi(k)}-
\frac{\ln{\gamma(s)}}{\varphi(s)}\right)}\right]\cdot
\frac{\varphi([k,s])}{2\rho(k,s)}\ . \label{Gamma3.6.5}
\end{equation}
If either a pair $(k,s)$ is exceptional, or the right hand side of
\eqref{Gamma3.6.5} is more than $138$ (these are possible only for
pairs $(k,s)$ with $4\le s\le 11$ and $s\le k\le 870$), we also
apply to the pair $(k,s)$ the method A of the Case 2 to improve
the poor bound \eqref{Gamma3.6.4} of $N_0=[\bk:\bff_{k,s}]$ for
non-exceptional $(k,s)$. We can apply this method to any pair
$(k,s)$ since \eqref{case2.14} is valid for all $k\ge s\ge 3$ if
$a=2\gamma_0$. We obtain that $[\bk:\bq]\le 138$ for all $k\ge
s\ge 4$.

Let us assume that $s=3$ is exceptional. It means that either
$m_1=3$ or $m_3=3$ for $\Gamma^{(6)}_3(14)$. For example, let
$m_1=3$. Let us consider the V-arithmetic graph defined by $e$,
$\delta_1$ and $\delta_4$. We denote $\alpha=a_{14}^2$ where
$a_{14}=\delta_1\cdot \delta_4$. The determinant of the Gram matrix of
$e$, $\delta_1$ and $\delta_4$ is equal to $-6+2\alpha$. It
follows that $0<\sigma(\alpha)<3$ for $\sigma\not=\sigma^{(+)}$,
and $4<\sigma^{(+)}(\alpha)<14^2$. Then $\bk=\bq(\alpha)$. We
apply Theorems \ref{th121} and \ref{th122} to $\bff=\bq$ and
$\alpha$ where $M=1$, $B=2$, $R=\sqrt{3}/2$, $S=2\cdot e\cdot
14^2/3$. We obtain $[\bk:\bq]\le 76$.

Thus, finally, $[\bk:\bq]\le 138$ for all graphs
$\Gamma^{(6)}_3(14)$.

\subsubsection{V-arithmetic pentagon graphs $\Gamma^{(7)}_1(14)$.}
\label{subsubsec:Gamma7.1} In this case, considering $(-\alpha)$ from
\eqref{alpha},
we apply the methods A and B of Case 2 in Sec.
\ref{subsec:genresults} to $a=\gamma_0=(\sqrt{5}-1)^5\le
2.8854382$, $b_1=-14^5$, $b_2=-2^5$ (then $b=14^5$), and
$k:=\max\{m_1,m_2\}$, $s:=\min\{m_1,m_2\}$ where $k\ge s\ge 3$ and
$s_0=3$.

At first, we apply the Method B. Exceptional $l\ge 3$ satisfy
\eqref{case2.excepl} which is
\begin{equation}
\ln{\frac{4}{\sqrt{\gamma_0}}}-\frac{\ln{\gamma(l)}}{\varphi(l)}\le
0. \label{Gamma1.7.0}
\end{equation}
It follows that all $l\ge 3$ are non-exceptional.

All exceptional pairs $(k,s)$ where $k\ge s \ge 3$ that is when
\eqref{case2.2} which is
\begin{equation}
\ln{\frac{4}{\sqrt{\gamma_0}}}-\frac{\ln{\gamma(k)}}{\varphi(k)}-
\frac{\ln{\gamma(s)}}{\varphi(s)}\le 0
\label{Gamma1.7.1}
\end{equation}
satisfies, are $(k,s=3)$ where $k=3,4,5,7$.

We can take $K_0=324$ in \eqref{case2.6}. Then
$$
\Delta_1(2\gamma_0)=\ln{\frac{4}{\sqrt{\gamma_0}}}-
\frac{\ln{3}}{2}-\frac{\ln{331}}{330} \ge 0.28956765\ ,
$$
and $K_1=1262$ can be taken in \eqref{case2.10}. Checking
\eqref{case2.3} for $3\le s\le k<1262$, we obtain that $3\le s\le
90$ and $3\le s\le k\le 240$. Moreover, $k\le 126$ for $6\le
s\le 90$. For all these pairs $(k,s)$ satisfying \eqref{case2.3}
which is
\begin{equation}
\frac{\varphi([k,s])}{2\rho(k,s)}\cdot \left(
\ln{\frac{4}{\sqrt{\gamma_0}}}-\frac{\ln{\gamma(k)}}{\varphi(k)}-
\frac{\ln{\gamma(s)}}{\varphi(s)}\right) < 
\ln{\sqrt{\frac{14^5}{\gamma_0}}}-
\ln{\sin{\frac{\pi}{k}}}-
\ln{\sin{\frac{\pi}{s}}}\, ,
\label{Gamma1.7.2}
\end{equation}
we obtain
\begin{equation}
[\bff_{k,s}:\bq]=\frac{\varphi([k,s])}{2\rho(k,s)}\le 36
\label{Gamma1.7.3}
\end{equation}
where $36$ is achieved for $(k,s)=(73,3)$. Moreover, for all
these non-exceptional pairs $(k,s)$ we obtain the bound
\eqref{case2.11} which is
\begin{equation}
N_0=[\bk:\bff_{k,s}]\le \left[\frac {\ln{\sqrt{\frac{14^5}{\gamma_0}}}-
\ln{\sin{\frac{\pi}{k}}}-
\ln{\sin{\frac{\pi}{s}}}} {\frac{\varphi([k,s])}{2\rho(k,s)}\cdot
\left(
\ln{\frac{4}{\sqrt{\gamma_0}}}-\frac{\ln{\gamma(k)}}{\varphi(k)}-
\frac{\ln{\gamma(s)}}{\varphi(s)}\right)}\right],
\label{Gamma1.7.4}
\end{equation}
and finally we obtain the bound \eqref{case2.12} which is
\begin{equation}
N=[\bk:\bq]\le \left[\frac {\ln{\sqrt{\frac{14^5}{\gamma_0}}}-
\ln{\sin{\frac{\pi}{k}}}-\ln{\sin{\frac{\pi}{s}}}}
{\frac{\varphi([k,s])}{2\rho(k,s)}\cdot
\left(
\ln{\frac{4}{\sqrt{\gamma_0}}}-\frac{\ln{\gamma(k)}}{\varphi(k)}-
\frac{\ln{\gamma(s)}}{\varphi(s)}\right)}\right]\cdot
\frac{\varphi([k,s])}{2\rho(k,s)}\ . \label{Gamma1.7.5}
\end{equation}
If either a pair $(k,s)$ is exceptional, or the right hand side of
\eqref{Gamma1.7.5} is more than $36$ (these are possible only for
pairs $(k,s)$ with $3\le s\le 5$ and $s\le k\le 240$), we also
apply to the pair $(k,s)$ the method A of the Case 2 to improve
the poor bound \eqref{Gamma1.7.4} of $N_0=[\bk:\bff_{k,s}]$ for
non-exceptional $(k,s)$. We can apply this method to any pair
$(k,s)$ since \eqref{case2.14} is valid for all $k\ge s\ge 3$ if
$a=\gamma_0$. 

For all $3\le s\le 5$ and $s\le k\le 240$, the Method A gives 
$[\bk:\bq]\le 36$ except $s=k=3$ (equivalently, $m_1=m_2=3$). Finally 
$[\bk:\bq]\le 42$ for all graphs $\Gamma^{(7)}_1(14)$.

\subsubsection{V-arithmetic pentagon graphs $\Gamma^{(7)}_2(14)$.}
\label{subsubsec:Gamma7.2} Since $0<\sigma(\sin^2{(\pi/m_2)})<1$
for any embedding $\sigma:\bk\to \br$, in this case,
we have for $\alpha$ from \eqref{alpha}
the same inequalities \eqref{inegamma36} and \eqref{inegamma+3627}
as for $\Gamma^{(6)}_3(14)$. Thus,
we get the same upper bound $[\bk:\bq]\le 138$ as
for $\Gamma_3^{(6)}(14)$.

\medskip

This finishes the proof of Theorem \ref{thpentgraphs}.

\medskip


\section{Appendix: Some results about\\ cyclotomic fields} \label{appendix1}

Here we give some results about cyclotomic fields which we used. All of 
them follow from standard results. For example, see the book \cite{CF}.

We consider the cyclotomic field $\bq\left(\sqrt[l]{1}\right)$ and
its totally real subfield $\bff_l=\bq\left(\cos{(2\pi/l)}\right)$.
We have $[\bq\left(\sqrt[l]{1}\right):\bq]=\varphi(l)$ where 
$\varphi(l)$ is the Euler function. We have
$\bff_l=\bq\left(\sqrt[l]{1}\right)=\bq$ for $l=1,2$, and
$[\bff_l:\bq]=\varphi(l)/2$ for $l\ge 3$. It is known (e.g., see
\cite{CF}) that the discriminant of the field
$\bq\left(\sqrt[l]{1}\right)$ is equal to (where $p$ is prime)
\begin{equation}
|{\rm discr}\,\bq(\sqrt[l]{1})|=\frac{l^{\varphi(l)}}{\prod_{p|l}
{p^{\varphi(l)/(p-1)}}}\ . \label{discrql}
\end{equation}

Let $\zeta_l=\exp{(2\pi i/l)}$ be a primitive $l$-th root of $1$.
The element $\zeta_l$ generates the ring of integers of
$\bq\left(\sqrt[l]{1}\right)$. Further we assume that $l\ge 3$.
The equation of $\zeta_l$ over $\bff_l$ is
$g(x)=(x-\zeta_l)(x-\zeta_l^{-1})=x^2-(\zeta_l+\zeta_l^{-1})x+1=0$.
We have
$g^\prime(\zeta_l)=2\zeta_l-(\zeta_l+\zeta_l^{-1})=\zeta_l-\zeta_l^{-1}$.
Thus,
$$
N_{\bq\left(\sqrt[l]{1}\right)/\bff_l}(g^\prime(\zeta_l))=
(\zeta_l-\zeta_l^{-1})(\zeta_l^{-1}-\zeta_l)=4\sin^2{(2\pi/l)}
$$
which gives the discriminant $\delta
\left(\bq\left(\sqrt[l]{1}\right)/\bff_l\right)=4\sin^2{(2\pi/l)}$.
It follows
$$
|\delta\left(\bq\left(\sqrt[l]{1}\right)/\bq\right)|=
|\delta\left(\bff_l/\bq\right)^2N_{\bff_l/\bq}(4\sin^2{(2\pi/l)})|.
$$
We have
\begin{equation}
N_{\bff_l/\bq }(4\sin^2{(\pi/l)})=\gamma(l)=\left\{
\begin{array}{cl}
p &\ {\rm if}\  l=p^t>2\ {\rm where}\ p\ {\rm is}\ {\rm prime ,} \\
1 &\ {\rm otherwise.}
\end{array}\
\label{normsin}
 \right .
\end{equation}

If $l$ is odd, then $4\sin^2{(\pi/l)}$ and $4\sin^2{(2\pi/l)}$ are
conjugate, and their norms are equal. Thus,
$$
N_{\bff_l/\bq}(4\sin^2{(2\pi/l)})=\gamma(l),\ \text{if $l\ge 3$ is
odd.}
$$

If $l$ is even and $l_1=l/2$, then
$4\sin^2{(2\pi/l)}=4\sin^2{(\pi/l_1)}$. If $l_1$ is odd, then
$\bff_{l_1}=\bff_{l}$, and we get
$$
N_{\bff_l/\bq}(4\sin^2{(2\pi/l)})=\gamma(l/2)\ \text{if $l\ge 6$
is even, but $l/2$ is odd.}
$$
If $l_1\ge 4$ is even, then $[\bff_l:\bff_{l_1}]=2$, and we get
$$
N_{\bff_l/\bq}(4\sin^2{(2\pi/l)})=\gamma(l/2)^2\ \text{if $l\ge 8$
and $l/2$ is even.}
$$
At last,
$$
N_{\bff_4/\bq}(4\sin^2{(2\pi/4)})=4
$$
if $l=4$.

Thus, finally we get for $l\ge 3$:
\begin{equation}
N_{\bff_l/\bq }(4\sin^2{(2\pi/l)})=\widetilde{\gamma}(l)=\left\{
\begin{array}{cl}
\gamma(l) &\ {\rm if}\ l\ge 3\ {\rm is\ odd,} \\
\gamma(l/2) &\ {\rm if}\ l/2 \ge 3\ {\rm is\ odd,}\\
\gamma(l/2)^2 &\ {\rm if}\ l/2\ge 4\ {\rm is\ even,}\\
4 &\ {\rm if}\ l=4. \\
\end{array}\
\label{normsin2}
 \right .
\end{equation}
Moreover, we obtain the formula for the discriminant:
\begin{equation}
|{\rm discr}\,\bff_l|= \left(|{\rm discr}\,\bq
(\sqrt[l]{1})|\left/\right.\widetilde{\gamma}(l)\right)^{1/2}\
\text{for}\ l\ge 3
\label{discrFl}
\end{equation}
where $|{\rm discr}\,\bq (\sqrt[l]{1})|$ is given by
\eqref{discrql}, and $\widetilde{\gamma}(l)$ is given by
\eqref{normsin2}.

We denote
$\bff_{k,s}=\bq\left(\cos{(2\pi/k)},\,\cos{(2\pi/s)}\right)$.
Further we assume that $k,s\ge 3$. Let $m=[k,s]$ be the least
common multiple of $k$ and $s$. Then $\bff_{k,s}\subset
\bff_m\subset \bq(\sqrt[m]{1})$. We have
$\text{Gal}\,\left(\bq(\sqrt[m]{1})/\bq\right)=(\bz/m\bz)^\ast$
where $\alpha\in (\bz/m\bz)^\ast$ acts on each $m$-th root $\zeta$
of $1$ by the formula $\zeta\mapsto \zeta^\alpha$. Obviously,
$\bff_{k,s}$ is the fixed field of the subgroup $G$ of the Galois
group $(\bz/m\bz)^\ast$ which consists of all $\alpha \in
(\bz/m\bz)$ such that $\alpha\equiv \pm 1\mod k$ and $\alpha\equiv
\pm 1\mod s$. The $G$ includes the subgroup of order two of
$\alpha\equiv \pm 1\mod m$. If $\alpha\equiv 1\mod k$, then
$\alpha\equiv 1+kt\mod m$ where $t\in \bz$. If $1+kt\equiv -1\mod
s$, then the equation $kt+sr=2$ has an integer solution $(t,r)$
which is equivalent to $(k,s)|2$. Thus, $G$ has the order $4$ if
and only if  $(k,s)|2$. Otherwise, $G$ has the order $2$. We set
\begin{equation}
\rho(k,s)=\left\{
\begin{array}{cl}
2 &\ {\rm if}\ (k,s)|2, \\
1 &  {\rm otherwise,}\\
\end{array}\
\label{defrho}
 \right.
\end{equation}
and we obtain
\begin{equation}
[\bff_{k,s}:\bq]=\frac{\varphi(m)}{2\rho(k,s)}. \label{degreekl}
\end{equation}
Moreover, we get
$$
\bff_{k,s}=\bff_m\ \text{if}\ (k,s)\not|\, 2.
$$
It follows,
\begin{equation}
|\text{discr}\,\bff_{k,s}|=|\text{discr}\,\bff_m|\ \text{if}\
(k,s)\not|\,2, \label{discrFks1}
\end{equation}
where $m=[k,s]$, and $|\text{discr}\,\bff_m|$ is given by
\eqref{discrFl}.

Assume that $(k,s)|2$. If $(k,s)=1$, then the fields
$\bq(\sqrt[k]{1})$ and $\bq(\sqrt[s]{1})$ are linearly disjoint
and their discriminants are coprime. Then their subfields $\bff_k$
and $\bff_s$ are linearly disjoint, and their discriminants are
coprime, and we obtain
$$
|\text{discr}\,\bff_{k,s}|=|\text{discr}\,\bff_k|^{(\varphi(s)/2)}
|\text{discr}\,\bff_s|^{(\varphi(k)/2)}\ \text{if}\ (k,s)=1\
\text{and}\ k,s\ge 3.
$$

Assume that $(k,s)=2$. Then one of $k/2$ or $s/2$ is odd. Assume,
$k_1=k/2$ is odd. Then $\bff_k=\bff_{k_1}$ and
$\bff_{k,s}=\bff_{k_1,s}$ where $(k_1,s)=1$. Thus, we obtain the
previous case which gives exactly the same formula. We finally
obtain
\begin{equation}
|\text{discr}\,\bff_{k,s}|=|\text{discr}\,\bff_k|^{(\varphi(s)/2)}
|\text{discr}\,\bff_s|^{(\varphi(k)/2)}\ \text{if}\ (k,s)|2\
\text{and}\ k,s\ge 3 \label{discrFks2}
\end{equation}
where the discriminants $|\text{discr}\,\bff_k|$ and
$|\text{discr}\,\bff_s|$ are given by \eqref{discrFl}.


\newpage

V.V. Nikulin \par Deptm. of Pure Mathem. The University of
Liverpool, Liverpool\par L69 3BX, UK; \vskip1pt Steklov
Mathematical Institute,\par ul. Gubkina 8, Moscow 117966, GSP-1,
Russia

vnikulin@liv.ac.uk \ \ vvnikulin@list.ru

\end{document}